\newtheorem{thm}{Theorem}[section]
\newtheorem{cor}[thm]{Corollary}
\newtheorem{lem}[thm]{Lemma}
\newtheorem{prop}[thm]{Proposition}
\newtheorem{defn}[thm]{Definition}
\newtheorem{rem}[thm]{Remark}
\newtheorem{Notation}[thm]{Notation}
\newcommand{\Hom}{\mbox{Hom}\,}
\newcommand{\Ext}{\mbox{Ext}\,}
\newcommand{\Tor}{\mbox{Tor}\,}
\newcommand{\depth}{\mbox{depth}\,}
\renewcommand{\dim}{\mbox{dim}\,}
\newcommand{\Min}{\mbox{Min}\,}
\newcommand{\pd}{\mbox{proj.dim}\,}
\newcommand{\id}{\mbox{inj.dim}\,}
\newcommand{\gd}{\mbox{G-dim}\,}
\newcommand{\h}{\mbox{ht}\,}
\newcommand{\im}{\mbox{Im}\,}
\newcommand{\E}{\mbox{E}}
\renewcommand{\H}{\mbox{H}}
\newcommand{\F}{\mbox{F}}
\newcommand{\D}{\mbox{D}}
\newcommand{\fm}{\mathfrak{m}}
\newcommand{\fp}{\mathfrak{p}}
\newcommand{\fP}{\mathfrak{P}}
\newcommand{\fq}{\mathfrak{q}}
\newcommand{\fQ}{\mathfrak{Q}}
\newcommand{\fn}{\mathfrak{n}}
\begin{document}
%%% ----------------------------------------------------------------------
%\bibliographystyle{amsplain}
%%% ----------------------------------------------------------------------

\title{ On the interplay between the Frobenius functor and its dual}

\author[Dibaei]{Mohammad T. Dibaei$^1$}
\address{$^1$ Faculty of Mathematical Sciences and Computer, Kharazmi University, Tehran, Iran; and School of Mathematics, Institute for Research in Fundamental Sciences (IPM), P.O. Box: 19395-5746, Tehran, Iran.}
\email{dibaeimt@ipm.ir}

\author[Eghbali]{Mohammad Eghbali$^2$}
\address{$^2$ Faculty of Mathematical Sciences and Computer, Kharazmi University, Tehran, Iran} \email{m.eghbali007@yahoo.com}

\author[Khalatpour]{Yaser Khalatpour$^3$}
\address{$^3$ Faculty of Mathematical Sciences and Computer, Kharazmi University, Tehran, Iran} \email{yaserkhalatpour@gmail.com}
\keywords{Frobenius functor; Frobenius dual functor; $\F$-finite; canonical module}
\subjclass[2010]{13H10; 13D45}

%%%%%%%%%%%%%%%%%%%%%%%%%%%%%%%%%%%%%%%%%%%%%%%%%%%%%%%%%%%%%%%%%%%%%%%%%%%%%%%%%%%%%%%%%%%%%%%%%%%%%%%%%%%%%%%%%%%%%%%%%%%%%%%%%%%%%%%%%%%%%%%%%

\begin{abstract}
For a commutative Noetherian ring $R$ of prime characteristic, denote by ${^f\hspace{-0.5mm}R}$  the ring $R$ with the left structure given by the Frobenius map. We develop Thomas Marley's work on the property of the Frobenius functor $\F(-) = - \otimes_R {^f\hspace{-0.5mm}R}$ and show some interplays between $\F$ and its dual   $\widetilde{\F}(-) = \Hom_R({^f\hspace{-0.5mm}R}, -)$ which is introduced by J\"{u}rgen Herzog.
\end{abstract}

%%%%%%%%%%%%%%%%%%%%%%%%%%%%%%%%%%%%%%%%%%%%%%%%%%%%%%%%%%%%%%%%%%%%%%%%%%%%%%%%%%%%%%%%%%%%%%%%%%%%%%%%%%%%%%%%%%%%%%%%%%%%%%%%%%%%%%%%%%%%%%%%%

\maketitle

%%%%%%%%%%%%%%%%%%%%%%%%%%%%%%%%%%%%%%%%%%%%%%%%%%%%%%%%%%%%%%%%%%%%%%%%%%%%%%%%%%%%%%%%%%%%%%%%%%%%%%%%%%%%%%%%%%%%%%%%%%%%%%%%%%%%%%%%%%%%%%%%%

\section{Introduction}
Let $R$ be a commutative Noetherian ring of prime characteristic $p$ and let $f$ be the Frobenius map. Consider ${^f\hspace{-0.5mm}R}$ which, as an additive group coincides with $R$, and has bimodule structure over $R$ such that $R$ acts on the left via $f$ and on the right via the identity. The functor $\F(-) = - \otimes_R {^f\hspace{-0.5mm}R}$, which has been introduced by  Peskine-Szpiro, plays an important role in commutative algebra (e.g., see \cite{Kun69}, \cite{PS73},\cite{HE74} and \cite{Kun76}).
Note that $\F$ preserves projectivity. Thomas Marley, in  \cite{MA14}, has studied rings on which $\F$ preserves injectivity and characterized one dimensional rings for which $\F$ preserves injective modules ($\F$PI for short). 

Our first task is to develop the Marley's idea to rings of higher dimensions in Section 3, Propositions \ref{prop2.11}.
By using the functor $\widetilde{\F}(-)=\Hom_R({^f\hspace{-0.5mm}R}, -)$, we show that a necessary and sufficient condition for a ring to be $\F$PI is that the two functors $(\F(-))^{*}$ and  $\widetilde{\F}((-)^{*})$ are equivalent on the category of finitely generated $R$-modules, where $(-)^{*} = \Hom_R (-, R)$ (see Theorem \ref{thmFF}).  As an application, we next prove that, under mild conditions, $R$ is quasi-Gorenstein if and only if $R$ is $\F$PI (Theorem\ref{thm3}).
Finally, we introduce rings for which $\widetilde{\F}$ preserves reflexivity ( $\widetilde{\F}$PR for short),  and show that an $\F$-finite one-dimensional local ring is Gorenstein ( resp. $\F$PI) if and only if it is $\widetilde{\F}$PR and $\id_R \widetilde{\F}(R) < \infty $  (resp. $R$ is Cohen-Macaulay and $\widetilde{\F}(R)$ has a non-trivial free direct summand) (see Theorem \ref{FtildeF}).

\section{Preliminaries}
Throughout this paper $R$ is a commutative Noetherian ring with prime characteristic $p$ and all modules are finitely generated $R$-modules unless otherwise stated explicitly. The Frobenius map $f:R\rightarrow R$ is defined by $f(r)=r^p$ for all $r\in R$ which is a ring homomorphism. Let ${^f\hspace{-0.5mm}R}$ denote the $(R-R)$-bimodule  which is $R$ as an additive group and has the structure on the left defined by $f$ and the structure on the right defined by the identity  map on $R$. For a positive integer $e$, denote $f^e=f\circ\cdots\circ f: R\rightarrow R$, $e$ times. Throughout the paper, we use some key functors derived from $f^e$. 

\begin{Notation} \label{l1}
	\begin{enumerate}[\rm(i)]
		\item \emph{Let ${^{f^e}}\hspace{-0.5mm}(-)$ be the functor from the category of $R$-modules to itself such that, for an $R$-module $M$, $ ^{f^e}\hspace{-0.5mm} M$ denote the abelian group $M$ viewed as a $(R-R)$-bimodule  via  the left and right scaler products $r\ast m = r^{p^e}m$ and $m\cdot r=rm$, $r\in R$, $m \in M$, respectively.
The functor ${^{f^e}}\hspace{-0.5mm}(-)$ is exact on the category of $R$-modules.}

\item \emph{The $e$th Peskine-Szpiro functor is $ \F^e _R (-) := (-) \otimes_R {^{f^e}\hspace{-0.5mm}R}$. For an $R$-module $M$, $\F^e_R(M)$ has been given an $R$-module structure as follows. For $r \in R$, $s\in {^{f^e}\hspace{-0.5mm}R}$ and $x\in M$, $r. (x \otimes s)= x \otimes (sr)$. Note that $(rx) \otimes s = x \otimes (r^{p^e}s) $. When the ring $R$ is clear in the context, we write $\F^e(M)$ instead of $\F^e_R(M)$ and for $e = 1$ write $\F (M)$.
Note that  $\F^e$ is an additive and right exact functor which preserves direct sums and direct limits. It is routine to check that  $\F (R) \cong R$. 
It follows that, for any ideal $I$ of R, $\F(R/I) \cong R/I^{[p]} $, where $I^{[p]} = (a^p | a \in I) $. Iterating, we obtain $\F^e(R)\cong R$ and $\F^e (R/I) \cong R/I^{[q]}$, where $q=p^e$.}

\item \emph{As the dual of the Peskine-Szpiro functor $\F^e$, recall the functor $\widetilde{\F}_R ^e (-) = \Hom_R ({^{f^e}\hspace{-0.5mm}R} , -) $ on the category of $R$-modules from \cite[\S2, 2]{HE74}. For an $R$-module $M$,  $\widetilde{\F}_R ^e (M)$ is an abelian group which has an $R$-module structure via the right $R$-action on ${^{f^e}\hspace{-0.5mm}R}$, i.e. for any  $r\in R$, $\theta\in \Hom_R ({^{f^e}\hspace{-0.5mm}R} , M)$ and $s\in {^{f^e}\hspace{-0.5mm}R}$,  $(\theta r)(s) = \theta (sr)$. As $\theta$ is an $R$-homomorphism of left $R$-modules, one can see that $ \theta (r^{p^{e}}s) = r\theta(s)$. Note that  $\widetilde{\F}_R ^e (-)$ is an additive and left exact functor on the category of $R$-modules which preserves  inverse limits. We use symbol $\widetilde{\F}^e$ instead of  $\widetilde{\F}^e_R$  and $\widetilde{\F}$ for the case $e=1$. For more details about ${^{f^e}}\hspace{-0.5mm}(-)$, $\F^e$ and $\widetilde{\F}^e$ see \cite{Kun69}, \cite{PS73}, \cite{HE74}, \cite{Go77}, \cite{BH98}.}
\end{enumerate}
\end{Notation}
We begin by the following well-known definition and remark.
\begin{defn}
\emph{ A ring $R$ is said to be $\F$-{\it finite} if ${^f\hspace{-0.5mm}R}$ is a finitely generated
%as
left $R$-module.}
\end{defn}
The ring $R$ is $\F$-finite if and only if ${^{f^e}\hspace{-0.5mm}R}$ is a finitely generated $R$-module for any $e\geqslant 1$. For any prime ideal $\fq$ of $R$, $({^f\hspace{-0.5mm}R})_{\fq}\cong {^f\hspace{-0.5mm}(R_{\fq})}$ as $R_{\fq}$-module, without $R$ being $\F$-finite. The property of being $\F$-finite is preserved under localizations and finitely generated algebra extensions. Also, when $(R, \fm)$ is local, one has ${^f\hspace{-0.5mm}(\widehat{R})}\cong \widehat{^f\hspace{-0.5mm}R}$ as $\widehat{R}$-module. Therefore, if $R$ is $\F$-finite then $\widehat{R}$ is $\F$-finite.

\begin{rem} \label{rem2.1}
\emph{ Let $(R, \fm, k)$ be a local ring.
\begin{itemize}
\item Let $R$ be an Artinian local ring, $M$ a finitely generated $R$-module. Consider a minimal presentation $R^n \overset{(a_{ij})}{\longrightarrow} R^m \rightarrow M \rightarrow 0$ for $M$. Applying $\F^e$, gives the exact sequence $R^n \overset{(a_{ij}^{p^e})}{\longrightarrow} R^m \rightarrow \F^e(M) \rightarrow 0$. As $(a_{ij}^{p^e}) = 0$ for sufficiently large $e$, one obtains $\F^e(M) \cong R^m$ for $e\gg 0$.
\item When $R$ is $\F$-finite, $[k:k^p]$ is a power of $p$. Set $\alpha = \log_p [k:k^p]$.
For any ideal $I$ of $R$ and any $R$-module $M$, one has $ \frac{R}{I} \otimes_R {^{f^e}\hspace{-0.5mm}M} \cong {^{f^e}\hspace{-0.5mm}(M/I^{[q]}M)}$ as left $R$-modules, where $q=p^e$ (see Notation \ref{l1}(i)). If $M$ has finite length, i.e. $ \ell_R(M)< \infty  $, one may see that $\ell_R ({^{f^e}\hspace{-0.5mm}M}) = q ^ {\alpha} \ell_R (M)$ for every $e$ and ${^{f^e}\hspace{-0.5mm}M} \cong \overset{t}{\oplus} k$ as left $R$-module for all $e \gg 0$, where $t= q ^ {\alpha} \ell_R (M)$.
\end{itemize}}
\end{rem}

For a local ring $(R, \fm, k)$, set $\D_R (-)= \Hom_R (- , \E_R(R/{\fm}))$ to be  the Matlis dual functor. We use the result due to Herzog which shows the interplay between  $\F$ and $\widetilde{\F}$.

\begin{prop} \cite[Proposition 4.2]{HE74}.  \label{DFtildF}
Let $(R, \fm)$ be an $\emph\F$-finite local ring.   Then, for any $e>0$, the following statements hold true.
\begin{enumerate} [\rm(i)]
\item $\emph\D_R (\widetilde{\emph\F}^e (A)) \cong \emph\F^e (\emph\D_R (A))$ for any Artinian $R$-module $A$.
\item $\widetilde{\emph\F}^e (\emph\D_R (M)) \cong \emph\D_R (\emph\F^e (\widehat{M}))$ for any finitely generated $R$-module $M$.
\end{enumerate}
\end{prop}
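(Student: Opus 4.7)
I would construct the natural evaluation morphism
\[
\varepsilon \colon \F^e(\D_R(A)) = \D_R(A) \otimes_R {^{f^e}\hspace{-0.5mm}R} \longrightarrow \D_R(\widetilde{\F}^e(A)), \qquad \phi \otimes s \longmapsto \bigl(\theta \mapsto \phi(\theta(s))\bigr),
\]
and then prove it is an isomorphism. Because $R$ is $\F$-finite and Noetherian, ${^{f^e}\hspace{-0.5mm}R}$ is a finitely presented $R$-module, so a finite presentation $R^n \to R^m \to {^{f^e}\hspace{-0.5mm}R} \to 0$ is available. Applying the exact functor $\D_R \circ \Hom_R(-, A)$ (exact since $\E_R(R/\fm)$ is injective) and the right-exact functor $\D_R(A) \otimes_R -$ produces two four-term exact sequences ending in $\D_R(\widetilde{\F}^e(A))$ and $\F^e(\D_R(A))$ respectively, in which the first two maps $\D_R(A)^n \to \D_R(A)^m$ are both given by the same matrix. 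Since $\varepsilon$ is natural and restricts to the identity on each free piece, comparing cokernels forces $\varepsilon$ to be an isomorphism.

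\textbf{Plan for (ii).} I would chain two natural isomorphisms. The standard Hom-tensor adjunction gives
\[
\widetilde{\F}^e(\D_R(M)) = \Hom_R({^{f^e}\hspace{-0.5mm}R}, \Hom_R(M, \E_R(R/\fm))) \cong \Hom_R(M \otimes_R {^{f^e}\hspace{-0.5mm}R}, \E_R(R/\fm)),
\]
and by tracking the right action of $R$ on ${^{f^e}\hspace{-0.5mm}R}$ through the adjunction one verifies that the tensor on the right carries precisely the $\F^e$-module structure, so the right-hand side equals $\D_R(\F^e(M))$. Next I would invoke the standard identity $\D_R(N) \cong \D_R(\widehat{N})$ for every finitely generated $R$-module $N$ (a consequence of $\Hom_R(\widehat{R}, \E_R(R/\fm)) \cong \E_R(R/\fm)$, which holds because $\E_R(R/\fm)$ is $\fm$-torsion), together with the isomorphism $\widehat{\F^e(M)} \cong \F^e(\widehat{M})$ provided by the flatness of completion. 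Chaining these yields $\D_R(\F^e(M)) \cong \D_R(\widehat{\F^e(M)}) \cong \D_R(\F^e(\widehat{M}))$.

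\textbf{Main obstacle.} The principal technical nuisance is bookkeeping the two distinct $R$-actions on the bimodule ${^{f^e}\hspace{-0.5mm}R}$: the left action (via $f^e$) enforces the $R$-linearity condition inside the inner $\Hom$ and governs the tensor balance, while the right action (the identity on $R$) supplies the ambient $R$-module structure on the outputs and must match the conventions defining $\F^e$ and $\widetilde{\F}^e$. Showing that the constructed isomorphisms are $R$-linear, not merely additive, with respect to these specific actions is the substantive point. A shortcut for (ii) bypassing some of this bookkeeping is to deduce it directly from (i) by Matlis duality: the module $\widetilde{\F}^e(\D_R(M))$ is Artinian (being $\Hom$ from the finitely generated ${^{f^e}\hspace{-0.5mm}R}$ into an Artinian module), so applying $\D_R$ to (i) at $A = \D_R(M)$ and invoking biduality (using $\D_R \D_R M \cong \widehat{M}$ for finitely generated $M$ and $\D_R \D_R A \cong A$ for Artinian $A$) gives (ii) directly.
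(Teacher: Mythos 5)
The paper does not include its own proof of this proposition; it is cited verbatim from Herzog's article [HE74, Proposition 4.2], so there is no internal argument to compare against. On its own terms your plan is sound and, as far as I can tell, matches the spirit of Herzog's original argument (the key device in both (i) and the first route for (ii) is the finite presentability of ${^{f^e}\hspace{-0.5mm}R}$ combined with injectivity of $\E_R(R/\fm)$). Two small points worth tightening. First, in (i) the composite $\D_R \circ \Hom_R(-,A)$ is not exact but only right exact as a covariant functor: $\Hom_R(-,A)$ turns a free presentation $R^n \to R^m \to {^{f^e}\hspace{-0.5mm}R} \to 0$ into a left-exact sequence, and $\D_R$ then produces a right-exact one. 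That is all you need, since you are only comparing cokernels of the same matrix, but the word ``exact'' overstates it. Second, your alertness to the two $R$-actions on ${^{f^e}\hspace{-0.5mm}R}$ is exactly the right concern: the cokernel comparison shows $\varepsilon$ is a bijection, while $R$-linearity for the relevant (right-action) structures must be and is verified directly from the explicit formula $\varepsilon(\phi\otimes s)(\theta)=\phi(\theta(s))$, using $\theta(r^{p^e}s)=r\theta(s)$ for balance and $(\theta r)(s)=\theta(sr)$ for linearity. The Matlis-duality shortcut for (ii) is clean and correct: with $A=\D_R(M)$ Artinian, $\widetilde{\F}^e(\D_R(M))=\Hom_R({^{f^e}\hspace{-0.5mm}R},\D_R(M))$ is Artinian because ${^{f^e}\hspace{-0.5mm}R}$ is finitely generated, and the biduality identities $\D_R\D_R(M)\cong\widehat{M}$ and $\D_R\D_R(A)\cong A$ give the statement immediately; this is arguably the tidier of your two routes since it avoids re-checking that the tensor–Hom adjunction respects the twisted structures.
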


\section{Rings whose Frobenius functors  preserve injective modules}
In \cite{MA14}, Thomas Marley has characterized rings whose Frobenius functor $\F$ preserves injective modules and called such rings $\F$PI  rings. In other words, $R$ is $\F$PI if $\F(I)$ is injective for every injective $R$-module $I$. The ring $R$ is called {\it weakly} $\F$PI if $\F (I)$ is injective for every  injective Artinian $R$-module $I$ (see \cite[Definition 3.1]{MA14}).  Note that if $R$ is a homomorphic image of a Gorenstein ring then $\F$PI and {\it weakly} $\F$PI are equivalent (see \cite[Theorem 3.11]{MA14}). 
We use the following result frequently. 
\begin{prop}\label{m1} \cite[Proposition 3.4 (b)]{MA14} Every $\F$PI ring is generically Gorenstein.
\end{prop}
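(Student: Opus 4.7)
The plan is to reduce to the zero-dimensional case by localizing at a minimal prime, then use the key observation from Remark \ref{rem2.1} that over an Artinian local ring the Frobenius functor eventually kills every off-diagonal entry of a minimal presentation and therefore turns any finitely generated module into a free module. A ring $R$ is generically Gorenstein precisely when $R_\fp$ is Gorenstein for every minimal prime $\fp$; since $\fp$ is minimal, $R_\fp$ is Artinian local, and in that setting Gorenstein is the same as self-injective.

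The first step is to verify that the $\F$PI property descends to localizations. Let $\fp$ be any prime of $R$ and let $J$ be an injective $R_\fp$-module. Since every injective $R_\fp$-module is a direct sum of modules of the form $\E_{R_\fp}(R_\fp/\fq R_\fp) \cong \E_R(R/\fq)$ with $\fq \subseteq \fp$, the module $J$ is also an injective $R$-module. The identification ${^f\hspace{-0.5mm}(R_\fp)} \cong ({^f\hspace{-0.5mm}R})_\fp$ as $(R_\fp,R_\fp)$-bimodules together with the usual commutation of tensor with localization gives
\[
\F_{R_\fp}(J) \;=\; J \otimes_{R_\fp} {^f\hspace{-0.5mm}(R_\fp)} \;\cong\; (J \otimes_R {^f\hspace{-0.5mm}R})_\fp \;=\; \bigl(\F_R(J)\bigr)_\fp,
\]
and localizations of injective modules over a Noetherian ring are injective over the localized ring. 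Consequently, if $R$ is $\F$PI then so is $R_\fp$, and iteration gives that $\F^e_{R_\fp}$ preserves injectivity for every $e \geqslant 1$.

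Now fix a minimal prime $\fp$ and set $(A,\fm,k) = R_\fp$; this is an Artinian local ring that is $\F$PI. Let $E = \E_A(k)$, which is finitely generated over $A$ because $A$ is Artinian, and is nonzero. Applying the first bullet of Remark \ref{rem2.1} to a minimal presentation of $E$, there exists $e \gg 0$ such that $\F^e(E) \cong A^{\mu_A(E)}$ with $\mu_A(E) \geqslant 1$. Since iterated Frobenius preserves injectivity over $A$, the module $A^{\mu_A(E)}$ is injective over $A$; taking a direct summand shows $A$ is self-injective, hence Gorenstein. This holds for every minimal prime $\fp$, so $R$ is generically Gorenstein.

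The only subtle point I anticipate is the verification that $\F$PI passes to localization, because one must keep the left/right $R$-actions on ${^f\hspace{-0.5mm}R}$ straight when identifying $\F_{R_\fp}(-)$ with $(\F_R(-))_\fp$; everything else reduces to the clean statement in Remark \ref{rem2.1} about Frobenius eventually trivializing minimal presentations in the Artinian setting.
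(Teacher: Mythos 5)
Your proof is correct. The paper records this result only as a citation to Marley, but your argument is essentially the one in the source: reduce to the zero-dimensional case by showing that $\F$PI localizes (using $({^f\hspace{-0.5mm}R})_\fp \cong {^f\hspace{-0.5mm}(R_\fp)}$ and the compatibility of tensor with localization), then over the resulting Artinian local ring apply the observation of Remark~\ref{rem2.1} that a minimal presentation of $\E_A(k)$ is eventually trivialized by $\F^e$, so $\F^e(\E_A(k))$ is a nonzero free module; since it is also injective, $A$ is self-injective and hence Gorenstein.
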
  
Note that the converse of the above proposition may not be true. Set $R = k[[x,y]]/(xy,y^2) $. As $\depth R=0$, $R$ is not Cohen-Macaulay and so it is not $\F$PI, by \cite[Proposition 3.12]{MA14}. Note that $R_{(y)}$ is a field and so $R$ is generically Gorenstein.	
In \cite[Theorem 4.1]{MA14}, it is shown that the converse is also true under mild conditions in one dimensional case.
In the sequel, we  concentrate on Marley's study of $\F$PI rings. Before that, we recall some properties of a canonical module $K_R$,  if it exists, of a ring $R$ which is not necessarily Cohen-Macaulay. 

Let $(R, \fm)$ be a local ring of dimension $d$. A finitely generated $R$-module $K_R$ is called a canonical module of $R$ if $K_R \otimes_R\widehat{R}\cong\Hom_R(\H_{\fm}^d(R), \E_R(R/\fm))$, where $\H_\fm^d(R)$ is the top local cohomology module of $R$ with respect to the maximal ideal $\fm$ of $R$.

If a ring $R$ is a homomorphic image of a Cohen-Macaulay local ring $(S, \fn)$ possessing a canonical module $\omega_S$, then $R$ admits a canonical module $K_R\cong\Ext_S^{\dim S-\dim R}(R, \omega_S)$. In particular, if $(R, \fm)$ is a characteristic $p$ local ring which is $\F$-finite, then $R$ is a homomorphic image of an $\F$-finite regular local ring \cite[Remark 13.6]{Ga04}  and so  $R$ admits a canonical module $K_R$.
Note that when $K_R$ exists, the natural homomorphism $R \rightarrow \Hom_R(K_R, K_R)$ is an isomorphism if and only if $R$ is $(S_2)$ which is equivalent to that $\widehat{R}$ satisfies $(S_2)$.

 In \cite[Theorem 4.1]{MA14}, Marley has proved that a one-dimensional local ring $R$ satisfies $\F$PI if and only if $R$ is Cohen-Macaulay and admits a canonical ideal $\omega_R$ such that $\omega_R\cong\omega_R^{[p]}$.   In the following we develop this result for more general higher dimensions.	

\begin{thm}\label{prop2.11}\label{1} \emph{(Compare with \cite[Theorem 4.1]{MA14})}
Assume that $(R, \fm)$ is a local ring
satisfying the Serre condition $(S_2)$ 
which admits a canonical module $K_R$. 
\begin{enumerate}[\rm(i)]
\item If $R$ is generically Gorenstein and $K_R \cong K_R ^{[p]}$, then $R$ is $\emph\F$PI. 
\item If $R$ is an $\emph\F$PI Cohen-Macaulay  local ring such that $R / {K_R}^{[p]}$ is unmixed,  then $K_R \cong K_R^{[p]}$.
\end{enumerate}
\end{thm}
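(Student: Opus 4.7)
My approach mirrors Marley's one-dimensional argument \cite[Theorem 4.1]{MA14}, using the $(S_2)$ hypothesis to ascend to higher dimension. The key preparatory observations are that $(S_2)$ gives the canonical isomorphism $R \cong \End_R(K_R)$, while generically Gorenstein allows us to realize $K_R$ as an ideal of $R$ containing a non-zero-divisor, so that $K_R^{[p]}$ is well-defined as an ideal and $\F(K_R) \cong K_R^{[p]}$.

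For part (i), since $\F$ commutes with direct sums and localization, and the standing hypotheses localize well, the problem reduces to showing that $\F(\E_R(R/\fm))$ is injective. Passing to the completion and invoking Remark \ref{rem2.1} to maintain $\F$-finiteness makes Proposition \ref{DFtildF} available. The central step is to establish, via Hom--tensor adjunction combined with the $(S_2)$ identification $R \cong \End_R(K_R)$, the natural isomorphism
\[
\widetilde{\F}(R) \;\cong\; \Hom_R\bigl({^{f}\hspace{-0.5mm}R} \otimes_R K_R,\, K_R\bigr) \;\cong\; \Hom_R\bigl(K_R^{[p]},\, K_R\bigr).
\]
Granted this, the hypothesis $K_R \cong K_R^{[p]}$ gives $\widetilde{\F}(R) \cong \End_R(K_R) \cong R$. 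Matlis dualizing through Proposition \ref{DFtildF} and comparing with the minimal injective resolution of $R$ (whose terms for $(S_2)$ rings are governed by Bass numbers of the canonical module) then forces $\F(\E_R(R/\fm)) \cong \E_R(R/\fm)$, which is injective.

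For part (ii), I run the same thread in reverse. The $\F$PI hypothesis together with Cohen-Macaulayness ensures that $\F$ applied to the minimal injective resolution of $R$ remains a resolution by injectives; by comparing socles and Bass numbers we obtain $\F(\E_R(R/\fm)) \cong \E_R(R/\fm)$, which Matlis dualizes (via Proposition \ref{DFtildF}) to $\widetilde{\F}(R) \cong R$. By the key formula established in part (i), this reads $\Hom_R(K_R^{[p]}, K_R) \cong R$. The unmixedness of $R/K_R^{[p]}$ forces $K_R^{[p]}$ to have no embedded associated primes, so $K_R^{[p]}$ is reflexive with respect to the canonical module functor $\Hom_R(-, K_R)$. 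Applying this functor to the previous isomorphism recovers $K_R^{[p]} \cong \Hom_R(R, K_R) \cong K_R$, as required.

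The main obstacles will be (a) rigorously verifying the key isomorphism $\widetilde{\F}(R) \cong \Hom_R(K_R^{[p]}, K_R)$ while keeping straight the twisted $R$-module structures on $\F$ and $\widetilde{\F}$, and (b) in part (ii), confirming that the unmixedness hypothesis on $R/K_R^{[p]}$ is enough to invoke reflexivity of $K_R^{[p]}$ under $\Hom_R(-, K_R)$, since this reflexivity typically rests on an $(S_2)$-type condition that the hypothesis must be shown to imply in the present setting.
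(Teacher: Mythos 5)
Your approach routes everything through the dual functor $\widetilde{\F}$, Proposition~\ref{DFtildF}, and the formula $\widetilde{\F}(R)\cong\Hom_R(K_R^{[p]},K_R)$. All of these require $R$ to be $\F$-finite: $\widetilde{\F}(R)=\Hom_R({^f\hspace{-0.5mm}R},R)$ is not even finitely generated without it, Proposition~\ref{DFtildF} is stated only for $\F$-finite rings, and the identification with $\Hom_R(K_R^{[p]},K_R)$ comes from Herzog's Korollar~5.9, again in the $\F$-finite setting. But Theorem~\ref{prop2.11} is proved in Section~3, \emph{before} the standing $\F$-finiteness assumption of Section~4, and its hypotheses do not include $\F$-finiteness. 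Your remark that ``passing to the completion and invoking Remark~\ref{rem2.1} maintains $\F$-finiteness'' does not create $\F$-finiteness out of nothing; that remark only preserves the property when it is already present. The paper's proof deliberately sidesteps $\widetilde{\F}$ and works directly with $\F$ and local cohomology (the chain $\F(\E)\cong\F(\H_\fm^d(I))\cong\H_\fm^d(\F(I))\cong\H_\fm^d(I^{[p]})\cong\H_\fm^d(I)\cong\E$, using the exact sequence $0\to\ker\pi\to\F(I)\to I^{[p]}\to 0$ and $\dim\ker\pi<d$), precisely because $\F$ commutes with local cohomology without any $\F$-finiteness hypothesis.

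Two secondary issues would remain even with $\F$-finiteness added. First, the adjunction step $\widetilde{\F}(R)\cong\Hom_R({^f\hspace{-0.5mm}R}\otimes_R K_R,K_R)\cong\Hom_R(K_R^{[p]},K_R)$ is not automatic: the natural map $\F(K_R)\to K_R^{[p]}$ is surjective but generally has nonzero kernel, so you only get an injection $\Hom_R(K_R^{[p]},K_R)\hookrightarrow\Hom_R(\F(K_R),K_R)$; identifying these genuinely uses the structural arguments of Herzog's Korollar~5.9 (which the paper itself only invokes later, in Section~4, under $\F$-finiteness). Second, in part~(ii) you assert that unmixedness of $R/K_R^{[p]}$ forces $K_R^{[p]}$ to be $\Hom_R(-,K_R)$-reflexive. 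That is not enough: $\omega$-reflexivity of an ideal $I$ with $\dim R/I=d-1$ is tied to $R/I$ being Cohen--Macaulay, not merely unmixed. The paper's actual argument in part~(ii) circumvents this by Matlis-dualizing the local cohomology sequence, identifying $R/\im\beta$ with the canonical module of $R/\omega^{[p]}$, and then using faithfulness of the canonical module over the unmixed ring $R/\omega^{[p]}$ to force $\im\beta=\omega^{[p]}$.
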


\begin{proof}
(i). As $R$ is equidimensional and unmixed, i.e. $R$ has no embedded primes, the canonical module $K_R$ is isomorphic to an ideal $I$ of $R$ such that $\h (I) = 1$ (see \cite[Proposition 2.4, Proposition 2.6]{LMa14}.
Note that, by \cite[Corollary 4.3]{Aoy83}, all formal fibres of $R$ are generically Gorenstein which implies that $\widehat{R}$ is also generically Gorenstein. By using \cite[Proposition 3.4 (d)]{MA14}, in order to prove $R$ is $\F$PI, we may assume that $R$ is complete. Moreover, we will finish the task,  by \cite[Theorem 3.11]{MA14}, if we prove $\F(\E) \cong \E$ where $\E=\E_R(R/\fm)$.

Consider the natural surjection map $\pi : \F(I) \rightarrow I^{[p]}$, given by $\pi (x \otimes s) = s x^p$, $x\in I$, $s\in {^f\hspace{-0.5mm}R}$. Let $\fp$ be a minimal prime ideal of $R$.  As $R$ is generically Gorenstein and $\h (I)= 1$, we have $\F(I)_{\fp} \cong R_{\fp}$, $I^{[p]} R_{\fp} = R_{\fp}$ and so $(\ker\pi)_{\fp} = 0$, i.e. $\dim_R(\ker\pi) < d$, where  $d=\dim R$. 
Applying local cohomology to the exact sequence $0\to\ker\pi\to\F(I)\to I^{[p]}\to 0$ yields the isomorphism $\H_{\fm} ^d (\F(I)) \cong \H_{\fm}^d (I^{[p]})$. Now we have
\[\begin{array}{rlll}
\F(\E) & \cong\F(\H_{\fm}^d (I)) & \text{( by \cite[12.1.20 Theorem]{BS12})} \\
&\cong \H_{\fm} ^d (\F(I)) & \text{( by \cite[Proposition 2.1]{MA14})} \\
&\cong \H_{\fm} ^d (I^{[p]}) \\
&\cong \H_{\fm} ^d (I) & \text{( by assumption)}\\
& \cong \E,
\end{array}
\] as desired.

(ii). For convenience set $\omega := K_R$. As $R$ is $\F$PI, it is generically Gorenstein (see Proposition \ref{m1}) so that  $R/\omega$  is Gorenstein and $R/ \omega^{[p]}$ is equidimensional.  Note that $R/{\omega^{[p]}}$ is a homomorphic image of $R$ so that it admits a canonical module, $K_{R/{\omega^{[p]}}}$ say. Also, by assumption, $R/{\omega^{[p]}}$ is unmixed  so that the natural map 
$R/{\omega^{[p]}} \to \Hom_{R/{\omega^{[p]}}} (K_{R/{\omega^{[p]}}}, K_{R/{\omega^{[p]}}})$ is injective, by \cite[Definition (2.1) and Remark (2.2)]{HH94}. Applying $-\otimes_{R/\omega^{[p]}} \widehat{R/\omega^{[p]}}$, we obtain the injective natural map 
$\widehat{R/\omega^{[p]}} \to \Hom_{\widehat{R/\omega^{[p]}}} (K_{\widehat{R/\omega^{[p]}}}, K_{\widehat{R/\omega^{[p]}}})$.
Therefore  $\widehat{R/\omega^{[p]}}$ is unmixed. On the other hand,
%Note that  
$R$ is a homomorphic image of a Gorenstein ring so that $\F$PI and  weakly $\F$PI are equivalent which is also equivalent to say that $\widehat{R}$ is weakly $\F$PI \cite[Proposition 3.4 (c)]{MA14}. Thus we may assume that $R$ is complete.

Applying local cohomology to the exact sequence
$ 0 \rightarrow \omega \rightarrow R \rightarrow {R/\omega} \rightarrow 0 $ gives the exact sequence $ 0 \rightarrow \H_{\fm}^{d-1} (R / \omega) \rightarrow \E \rightarrow \H_{\fm}^d (R) \rightarrow  0$, from which we obtain the exact sequence $ \Tor_1^R (\H_{\fm} ^d(R) , {^f\hspace{-0.5mm}R})  \rightarrow \F(\H_{\fm}^{d-1} (R/\omega^{[p]})) \rightarrow \F(\E) \rightarrow \F(\H_{\fm}^d (R)) \rightarrow  0.$ 

Note that, by \cite[Lemma 2.2.]{MA14}, $\Tor_1^R (\H_{\fm} ^d(R) , {^f\hspace{-0.5mm}R}) = 0$. Also $\F(\H_{\fm}^{d-1} (R / \omega)) \cong \H_{\fm}^{d-1} (R / \omega^{[p]})$, $\F(\E) \cong \E$ and $\F(\H_{\fm}^{d} (R)) \cong \H_{\fm}^{d} (R)$.  Therefore, we get the exact sequence 
$ 0 \rightarrow \H_{\fm}^{d-1} (R/\omega^{[p]}) \rightarrow \E \rightarrow \H_{\fm}^d (R) \rightarrow  0.$ 

Apply $\D_R(-):=\Hom_R(-, \E)$ gives the exact sequence
$ 0 \rightarrow \omega \xrightarrow{\beta} R \rightarrow \D_R(\H_{\fm} ^{d-1} (R/{\omega ^{[p]}})) \rightarrow 0.$ Therefore \begin{center}
$\omega \cong\im\beta$  and  $R/\im\beta\cong\D_R(\H_{\fm} ^{d-1} (R/{\omega ^{[p]}})).$ 
\end{center}
We show that $R/\im\beta$ is the canonical module of $R/\omega^{[p]}$. 
%Note that $R/{\omega^{[p]}}$ is a homomorphic image of $R$ so that it admits a canonical module, $K_{R/{\omega^{[p]}}}$ say, and hence
 As mentioned above, $R/ \omega^{[p]}$ admits canonical module $K_{R/ \omega^{[p]}}$, we have 
$\H_\fm^{d-1} (R/ \omega^{[p]})\cong\H_{{\fm} / \omega^{[p]}} ^{d-1} (R/ \omega^{[p]}) \cong \D' ( K_{R/ \omega^{[p]}} )$, where $\D'(-) = \Hom_{R/ \omega^{[p]}} (- , \E _{R / \omega^{[p]}}(R/{\fm}))$. On the other hand, one has  $\E _{R / \omega^{[p]}}(R/{\fm}) \cong \Hom _R(R/ \omega^{[p]}, E_R(R/{\fm})) $ as $R/ {\omega^{[p]}}$-module.
Using adjointness one can see that $\H_{\fm}  ^{d-1} (R/ \omega^{[p]}) \cong \Hom _R( K_{R/ \omega^{[p]}}, \E) = \D_R (K_{R/ \omega^{[p]}})$, as $R$-module. Hence $K_{R/{\omega ^{[p]}}} \cong R/\im\beta$ as $R/\omega ^{[p]}$-module.

As $R/\omega^{[p]}$ is equidimensional and unmixed, 
$R/\im\beta $ is faithful  as $R/{\omega^{[p]}}$-module (see \cite[(1.8)]{Aoy83} or \cite[(2.2) Remark]{HH94} ).
Now we have $\im\beta / \omega^{[p]}\subseteq (0 :_{R/\omega^{[p]}} R/ \im\beta)= 0$ . Therefore $\omega \cong \im\beta = \omega^{[p]}$, as desired.
\end{proof}

\begin{cor}\label{M'}\emph{(Compare with \cite[Theorem 4.1 (b)$\Rightarrow$(c)]{MA14})}
Let $(R, \fm)$ be a Cohen-Macaulay local ring with canonical module $\omega _R$. If $R$ is generically Gorenstein and $\omega _R \cong \omega_R ^{[p]}$, then $R$ is $\F$PI. 
\end{cor}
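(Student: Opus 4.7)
The plan is to derive this as an immediate specialization of Theorem \ref{prop2.11}(i). First I would observe that every Cohen-Macaulay local ring satisfies the Serre condition $(S_2)$ (indeed $(S_n)$ for every $n$), so the structural hypothesis of the theorem is met automatically. Next I would note that the canonical module $\omega_R$ of a Cohen-Macaulay ring is, by definition, exactly the module $K_R$ appearing in Theorem \ref{prop2.11}: both satisfy $\omega_R \otimes_R \widehat{R} \cong \Hom_R(\H_{\fm}^d(R), \E_R(R/\fm))$ where $d = \dim R$, which is the defining property recalled in the paragraph preceding the theorem.

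With these identifications in place, the two remaining hypotheses of Theorem \ref{prop2.11}(i), namely that $R$ be generically Gorenstein and that $K_R \cong K_R^{[p]}$, are precisely the hypotheses of the corollary. Invoking part (i) of the theorem therefore yields that $R$ is $\F$PI, which is the desired conclusion.

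Since the corollary is a direct reformulation of the Cohen-Macaulay case of Theorem \ref{prop2.11}(i), there is no substantive obstacle to overcome. The only point worth making explicit is the identification of the \emph{canonical module} terminology in the two statements, which is handled by the defining local cohomological isomorphism; this also clarifies why the result is singled out, as it extends the sufficient direction of \cite[Theorem 4.1]{MA14} from the one-dimensional Cohen-Macaulay setting to Cohen-Macaulay rings of arbitrary dimension.
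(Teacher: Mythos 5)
Your proposal is correct and matches what the paper intends: Corollary~\ref{M'} is stated without a separate proof precisely because it is the Cohen--Macaulay specialization of Theorem~\ref{prop2.11}(i), and your observations (that Cohen--Macaulay implies $(S_2)$ and that $\omega_R$ is the canonical module $K_R$ in the sense used in the theorem) are exactly the routine identifications needed.
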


\begin{cor}
Let $(R, \fm)$ be an $\emph\F$PI Cohen-Macaulay local ring. Suppose that $R$ admits a canonical module $\omega_R$ such that $R / {\omega_R}^{[p]}$ is unmixed. If $\Tor_1 ^R(^f\hspace{-0.5mm}R , R/\omega_R)=0$, in particular if $\F(\omega_R)$ is torsion-free, then $R$ is Gorenstein.	
\end{cor}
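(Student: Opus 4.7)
The plan is to combine Theorem~\ref{prop2.11}(ii) with the given $\Tor$-vanishing to identify $\F(\omega_R)$ with $\omega_R$, and then to parlay this identification into the freeness of $\omega_R$ via a Frobenius-rigidity argument.

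First, the hypotheses of Theorem~\ref{prop2.11}(ii) are satisfied: $R$ is $\F$PI, Cohen--Macaulay (so in particular $(S_2)$), and $R/\omega_R^{[p]}$ is unmixed. That theorem yields $\omega_R \cong \omega_R^{[p]}$. Second, I would apply the right-exact functor $\F$ to the short exact sequence
\[
0 \to \omega_R \to R \to R/\omega_R \to 0;
\]
using $\F(R) \cong R$, $\F(R/\omega_R) \cong R/\omega_R^{[p]}$, and the hypothesis $\Tor_1^R({^fR},R/\omega_R)=0$, this yields a short exact sequence
\[
0 \to \F(\omega_R) \to R \to R/\omega_R^{[p]} \to 0,
\]
identifying $\F(\omega_R)$ with $\omega_R^{[p]}$ as a submodule of $R$. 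Combined with the first step, $\F(\omega_R) \cong \omega_R$.

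Third, I would deduce that $\omega_R$ is free, so that $R$ is Gorenstein. The idea is to propagate the Tor-vanishing hypothesis through iterates of Frobenius: iterating Theorem~\ref{prop2.11}(ii) gives $\omega_R \cong \omega_R^{[p^e]}$ for every $e \geq 1$, which (together with the analogous sequences in each Frobenius degree) should upgrade the single vanishing $\Tor_1^R({^fR},R/\omega_R)=0$ to $\Tor_i^R({^{f^e}R},R/\omega_R)=0$ for all $i\geq 1$ and all $e\gg 0$. By the Peskine--Szpiro--Herzog criterion, $R/\omega_R$ has finite projective dimension over $R$, hence so does $\omega_R$; since $\omega_R$ is maximal Cohen--Macaulay, Auslander--Buchsbaum forces $\pd_R\omega_R=0$, so $\omega_R$ is free of rank one and $R$ is Gorenstein.

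For the ``in particular'' clause, note that the natural map $\F(\omega_R)\to \F(R)=R$ has kernel $\Tor_1^R({^fR},R/\omega_R)$. This kernel is supported on the non-minimal primes of $R$: at any minimal prime $\fp$, $R_\fp$ is Gorenstein by Proposition~\ref{m1} and $(\omega_R)_\fp\cong R_\fp$, whence the $\Tor$ localizes to zero. But a torsion-free $R$-module can contain no nonzero submodule supported on primes of codimension $\geq 1$, so if $\F(\omega_R)$ is torsion-free the kernel must be trivial. The main obstacle is the third step, namely promoting the single degree-one vanishing (together with the fixed-point identity $\F(\omega_R)\cong \omega_R$) to vanishing of all higher $\Tor$ against powers of Frobenius so that the Peskine--Szpiro--Herzog theorem can be invoked; this is where the substantive content of the corollary lies.
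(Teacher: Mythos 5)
Your first two steps reproduce the paper's argument exactly: apply $\F$ to $0\to\omega_R\to R\to R/\omega_R\to 0$, use the $\Tor$-vanishing hypothesis to get $0\to\F(\omega_R)\to R\to R/\omega_R^{[p]}\to 0$, so $\F(\omega_R)\cong\omega_R^{[p]}$, and then invoke Theorem~\ref{prop2.11}(ii) to get $\omega_R^{[p]}\cong\omega_R$, hence $\F(\omega_R)\cong\omega_R$. Your justification of the ``in particular'' clause (that torsion-freeness of $\F(\omega_R)$ kills the $\Tor_1$ because it is supported in codimension $\geq 1$) is also correct and in fact more explicit than what appears in the paper.

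The gap is precisely your third step, and you flag it yourself as ``the main obstacle.'' Once you have $\F(\omega_R)\cong\omega_R$, the paper finishes in one line by citing Herzog's Korollar 4.3 of \cite{HE74}, which is exactly the Frobenius-rigidity statement you are trying to re-derive: if $M$ is a finitely generated module over a local ring of characteristic $p$ with $\F(M)\cong M$, then $M$ is free. Your sketch---iterate to get $\omega_R\cong\omega_R^{[p^e]}$ for all $e$, somehow promote the single $\Tor_1$ vanishing to $\Tor_i^R({^{f^e}R},R/\omega_R)=0$ for all $i\geq 1$ and $e\gg 0$, then quote the Peskine--Szpiro--Herzog acyclicity criterion and Auslander--Buchsbaum---is a plausible route, but the promotion step is not a formality and you do not carry it out. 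In fact, that promotion is essentially the content of Herzog's theorem (and its proof does not go through iterating your short exact sequence: the iterated Frobenius of a short exact sequence loses exactness on the left, so one cannot simply bootstrap $\Tor_1$-vanishing across the tower by this method). The proof would be complete if you replaced the speculative third step with a citation of Korollar 4.3 of \cite{HE74}.
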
	

\begin{proof}
Applying $\F$ to 
$ 0 \to \omega_R \to R \to R/ \omega_R \to 0$ implies the exact sequence $ 0 \to \F(\omega_R) \to R \to R / {\omega_R ^{[p]}} \to 0.$ Hence $\F(\omega_R) \cong \omega_R ^{[p]}\cong\omega_R$ by \ref{1}, which implies that $\omega_R$ is free by \cite[ Korollar 4.3]{HE74}.
\end{proof}

\section{The effect of  the Frobenius dual  functors on $\F$PI rings}
The dual of the Peskine-Szpiro functor $\F^e$,
which has been studied by Herzog in \cite{HE74}, is the functor $\widetilde{\F}_R ^e (-)
= \Hom_R ({^{f^e}\hspace{-0.5mm}R}, -) $ on the category of $R$-modules. As mentioned in the Preliminaries section, for an $R$-module $M$, $\widetilde{\F}^e(M)$ has an $R$-module structure via the right $R$-action on ${^{f^e}R}$. 

Throughout this section the ring $R$ is assumed to be $\F$-finite so that $R$ admits a canonical module $K_R$ ( when $R$ is Cohen-Macaulay we write $\omega_R$ instead). 

We first recall the following interesting result which gives a characterization for $R$ to be $\F$PI in terms of $\widetilde{\F}$.  

\begin{prop}\label{aa}\cite[Proposition 3.10]{MA14} 
If  $R$ is an $\emph\F$-finite local ring, then $R$ is $\emph\F$PI if and only if $\widetilde{\emph\F}(R)\cong R$.
\end{prop}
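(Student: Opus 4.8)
The plan is to reduce to the complete local case, rephrase ``$\F$ preserves injectives'' as ``$\F(\E_R(R/\fm))$ is injective,'' Matlis‑dualize this into a statement about $\widetilde{\F}(R)$, and finally check that the free module in question has rank $1$. As $R$ is $\F$-finite and local it is a homomorphic image of an $\F$-finite regular (hence Gorenstein) local ring, so by \cite[Theorem 3.11]{MA14} ``$\F$PI'' coincides with ``weakly $\F$PI,'' and by \cite[Proposition 3.4(c)]{MA14} this holds for $R$ iff it holds for $\widehat R$ (again $\F$-finite). On the other side $\widetilde{\F}(R)=\Hom_R({^f\hspace{-0.5mm}R},R)$ and ${^f\hspace{-0.5mm}R}$ are finitely generated and $R\to\widehat R$ is faithfully flat, and $\Hom$ commutes with flat base change on finitely generated modules, so $\widetilde{\F}_R(R)\otimes_R\widehat R\cong\Hom_{\widehat R}({^f\hspace{-0.5mm}(\widehat R)},\widehat R)=\widetilde{\F}_{\widehat R}(\widehat R)$, whence $\widetilde{\F}_R(R)\cong R$ iff $\widetilde{\F}_{\widehat R}(\widehat R)\cong\widehat R$. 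So I may assume $R$ is complete; put $\E=\E_R(R/\fm)$ and $\D=\Hom_R(-,\E)$.

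\emph{The main step.} Every Artinian injective $R$-module is a direct sum of copies of $\E$ and $\F$ preserves direct sums, so $R$ is weakly $\F$PI iff $\F(\E)$ is injective; and since ${^f\hspace{-0.5mm}R}$ is finitely generated, $\F(\E)=\E\otimes_R{^f\hspace{-0.5mm}R}$ is a quotient of a finite direct sum of copies of $\E$, hence Artinian. An Artinian injective module over the complete local ring $R$ is isomorphic to $\E^{t}$ for some $t$, and $\F(\E)\cong\E^{t}$ iff the Matlis dual $\D(\F(\E))\cong R^{t}$ is free (use that $\F(\E)$ is Matlis reflexive). I would then produce a natural isomorphism
\[
\D(\F(\E))=\Hom_R(\E\otimes_R{^f\hspace{-0.5mm}R},\E)\;\cong\;\Hom_R({^f\hspace{-0.5mm}R},\Hom_R(\E,\E))\;\cong\;\Hom_R({^f\hspace{-0.5mm}R},R)=\widetilde{\F}(R)
\]
of $R$-modules, using the Hom--tensor adjunction and $\Hom_R(\E,\E)\cong\widehat R=R$ (equivalently, by feeding Herzog's Proposition~\ref{DFtildF} into the identity $\F(\E)=\F(\D(R))$). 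Granting this, $R$ is $\F$PI iff $\widetilde{\F}(R)$ is a free $R$-module, say $\widetilde{\F}(R)\cong R^{t}$.

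\emph{The rank is $1$.} If $R$ is $\F$PI then it is generically Gorenstein by Proposition~\ref{m1}, so for a minimal prime $\fp$ the ring $R_\fp$ is zero-dimensional and Gorenstein; as $\Hom$ localizes on finitely generated modules, $\widetilde{\F}(R)_\fp\cong\widetilde{\F}_{R_\fp}(R_\fp)$, and over the Artinian Gorenstein (hence $\F$-finite, complete) ring $R_\fp$ one has $R_\fp\cong\E_{R_\fp}$, so Proposition~\ref{DFtildF}(ii) over $R_\fp$ gives $\widetilde{\F}_{R_\fp}(R_\fp)\cong\D_{R_\fp}(\F_{R_\fp}(R_\fp))\cong R_\fp$. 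Localizing $\widetilde{\F}(R)\cong R^{t}$ at $\fp$ then forces $t=1$, so $\widetilde{\F}(R)\cong R$. Conversely, if $\widetilde{\F}(R)\cong R$ then $\D(\F(\E))$ is free of rank one, so $\F(\E)\cong\E$ is injective, $R$ is weakly $\F$PI, and therefore $\F$PI.

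The routine parts are the reductions in the first and third paragraphs. I expect the main obstacle to be exactly the module-structure bookkeeping in the main step: keeping the left/right $R$-actions on ${^f\hspace{-0.5mm}R}$ and the Frobenius twists straight while interchanging $\Hom$, $\otimes$, restriction of scalars along the Frobenius, and Matlis duality, so that the displayed isomorphism $\D(\F(\E))\cong\widetilde{\F}(R)$ is genuinely $R$-linear (the $R$-structure transported from $\F(\E)$ being the one coming from the \emph{right} $R$-action on ${^f\hspace{-0.5mm}R}$, which must be seen to agree with the structure defining $\widetilde{\F}(R)$ with no leftover twist), and likewise verifying the identity $\widetilde{\F}_A(A)\cong A$ for a zero-dimensional $\F$-finite Gorenstein local ring $A$.
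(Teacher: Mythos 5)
Your overall plan follows what is essentially Marley's own route to his Proposition~3.10: reduce to the complete case, reformulate ``$R$ is $\F$PI'' as ``$\F(\E)$ is injective,'' translate under Matlis duality into freeness of the finitely generated module $\widetilde{\F}(R)$, and then pin down the rank by localizing at a minimal prime. The first and third paragraphs (the completion argument and the rank-one computation) are sound, and the use of Proposition~\ref{m1} plus Proposition~\ref{DFtildF}(ii) over the Artinian Gorenstein ring $R_\fp$ to force $t=1$ is exactly right.

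The point I would flag is the ``main step.'' The chain
\[
\Hom_R(\E\otimes_R{^f\hspace{-0.5mm}R},\E)\;\cong\;\Hom_R({^f\hspace{-0.5mm}R},\Hom_R(\E,\E))
\]
is \emph{not} an instance of the ordinary commutative Hom--tensor adjunction, because in $\F(\E)=\E\otimes_R{^f\hspace{-0.5mm}R}$ the tensor is taken over the \emph{left} (Frobenius) $R$-action on ${^f\hspace{-0.5mm}R}$, while the $R$-module structure on $\F(\E)$ used by the outer $\Hom_R$ comes from the \emph{right} (identity) $R$-action. The correct bimodule adjunction here gives
\[
\Hom_R(\E\otimes_R{^f\hspace{-0.5mm}R},\E)\;\cong\;\Hom_R\bigl(\E,\Hom_R({^f\hspace{-0.5mm}R},\E)\bigr)\;\cong\;\Hom_R(\E,{}^f\hspace{-0.5mm}\E),
\]
where ${}^f\hspace{-0.5mm}\E$ denotes $\E$ with the Frobenius-twisted $R$-action; this is genuinely different from $\Hom_R({^f\hspace{-0.5mm}R},R)$ as written, and passing from one to the other is where the real work lies. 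Likewise the appeal to Proposition~\ref{DFtildF} does not immediately close the gap: part~(ii) with $M=R$ and $R$ complete gives $\widetilde{\F}(\E)\cong\E$, and part~(i) with $A=\E$ gives $\D(\widetilde{\F}(\E))\cong\F(R)\cong R$, but $\D(\F(\E))\cong\widetilde{\F}(R)$ is the \emph{transpose} of (i) at $A=R$, and $R$ is not Artinian, so (i) as stated does not apply. You anticipate precisely this bookkeeping as ``the main obstacle,'' and that instinct is correct: establishing $\D(\F(\E))\cong\widetilde{\F}(R)$ with the module structures matched up (the left $R$-action of ${^f\hspace{-0.5mm}R}$ consumed by the inner Hom/tensor, the right $R$-action surviving to give the $R$-structure on both $\F$ and $\widetilde{\F}$) is essentially the whole content of the proposition and is what Marley actually proves. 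Once that isomorphism is in hand, the rest of your argument goes through as written.
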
 

We first apply this result to show that $R[X]$ inherits the property of being $\F$PI under mild conditions. 
\begin{prop}\label{Canonical}
	Let $R$ be an $\F$-finite ring which satisfies Serre condition $(S_2)$. If $R$ is $\F$PI then $R[X]$ is $\F$PI.
\end{prop}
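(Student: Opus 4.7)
The plan is to reduce to the local case and invoke Proposition \ref{aa}, the key technical ingredient being a natural compatibility
\[
\widetilde{\F}_{R[X]}(R[X]) \;\cong\; \widetilde{\F}_R(R) \otimes_R R[X]
\]
of the Frobenius dual functor under polynomial extension.

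First, $R[X]$ is $\F$-finite because $\F$-finiteness is preserved under finitely generated algebra extensions. Since being $\F$PI is a local property at maximal ideals (\cite[Proposition 3.4]{MA14}), it suffices to show $R[X]_\fn$ is $\F$PI for each maximal ideal $\fn$ of $R[X]$. Setting $\fp := \fn \cap R$, the ring $R_\fp$ is again $\F$-finite, $(S_2)$ and $\F$PI (by localization of the $\F$PI property), and $R[X]_\fn$ is a localization of $R_\fp[X]$ at a maximal ideal. Thus we may assume $(R, \fm)$ is local, $\F$-finite, $\F$PI and $(S_2)$, and that $\fn$ lies over $\fm$.

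In this setting, Proposition \ref{aa} gives $\widetilde{\F}_R(R) \cong R$ and, applied to the $\F$-finite local ring $R[X]_\fn$, tells us that $R[X]_\fn$ is $\F$PI if and only if $\widetilde{\F}_{R[X]_\fn}(R[X]_\fn) \cong R[X]_\fn$. Since $^fR[X]$ is finitely presented over $R[X]$ (by $\F$-finiteness), $\widetilde{\F}$ commutes with localization, so it is enough to build a natural isomorphism $\widetilde{\F}_{R[X]}(R[X]) \cong \widetilde{\F}_R(R) \otimes_R R[X]$; combining with $\widetilde{\F}_R(R) \cong R$ then gives $\widetilde{\F}_{R[X]}(R[X]) \cong R[X]$ globally, hence locally at $\fn$.

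The compatibility is built concretely: given a generator $\phi \colon {}^f R \to R$ of $\widetilde{\F}_R(R)$, define $\Phi \colon {}^f R[X] \to R[X]$ on $R$-coefficient monomials by
\[
\Phi(b X^{pk+i}) \;=\; \begin{cases} X^k \phi(b) & \text{if } i = p-1,\\ 0 & \text{if } 0 \le i < p-1, \end{cases}
\qquad (b \in R,\ k \ge 0)
\]
and extend additively. The Frobenius-twisted linearity $\phi(r^p y) = r\phi(y)$ makes $\Phi$ a well-defined $R[X]$-linear map of $^fR[X]$ to $R[X]$, and the assignment $\phi \mapsto \Phi$ is $R$-equivariant under the right $R$-actions; extension of scalars along $R \to R[X]$ then produces a natural $R[X]$-linear map $\widetilde{\F}_R(R) \otimes_R R[X] \to \widetilde{\F}_{R[X]}(R[X])$.

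The main obstacle is verifying that this natural map is an isomorphism. One argues by decomposing an arbitrary $\theta \in \widetilde{\F}_{R[X]}(R[X])$ along the residue classes modulo $p$ of the $X$-exponents (integrating out the right action of $X^j$ for $0 \le j < p$) to reduce the identification of $\theta$ to $p$-many maps $^fR \to R[X]$, which are then assembled from the flat-base-change isomorphism $\widetilde{\F}_R(R) \otimes_R R[X] \cong \Hom_R({}^f R, R[X])$ (valid as $^fR$ is finitely presented). Once this step is settled, Proposition \ref{aa} applied to $R[X]_\fn$ immediately yields the $\F$PI property and completes the argument.
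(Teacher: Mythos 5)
Your proof is correct in outline and takes a genuinely different route from the paper. The paper's argument is structural: after localizing, it invokes a canonical ideal $K_R$ (this is where the $(S_2)$ hypothesis together with generic Gorensteinness enters, via \cite[Proposition 2.4]{LMa14}), notes that $R[X]_\fP$ has canonical ideal $L \cong K_R \otimes_R R[X]_\fP$ of height one, applies Herzog's identification $\widetilde{\F}_{R[X]_\fP}(R[X]_\fP) \cong \Hom_{R[X]_\fP}(L^{[p]}, L)$ from \cite[Korollar 5.9]{HE74}, and then unwinds the adjunction to recover $\widetilde{\F}(R)[X]_\fP \cong R[X]_\fP$. You instead compute $\widetilde{\F}_{R[X]}(R[X])$ directly, bypassing canonical modules entirely. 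The key structural fact, which you sketch, is that ${}^f\hspace{-0.5mm}R[X] = \bigoplus_{i=0}^{p-1} V_i$ as a left $R[X]$-module, where $V_i$ consists of polynomials with exponents $\equiv i \pmod p$, and $V_i \cong R[X] \otimes_R {}^f\hspace{-0.5mm}R$; therefore $\theta \in \widetilde{\F}_{R[X]}(R[X])$ is determined by the $p$ maps $\bar\psi_i := \theta|_{V_i}(-\cdot X^i) \in \Hom_R({}^f\hspace{-0.5mm}R, R[X])$, and flat base change gives $\Hom_R({}^f\hspace{-0.5mm}R, R[X]) \cong \widetilde{\F}_R(R) \otimes_R R[X]$. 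The one subtlety, which you correctly flag as the main obstacle, is that this decomposition is only a decomposition of the left $R[X]$-structure, not of the right $R[X]$-structure that gives $\widetilde{\F}$ its module structure: the right action of $X$ permutes the $V_i$ cyclically (with a shift when passing $V_{p-1} \to V_0$). Checking that your explicit map $\phi \otimes h \mapsto \Phi_\phi \cdot h$ is a right $R[X]$-isomorphism amounts to observing that $\Phi_\phi \cdot X^{pm+l}$ ($0 \le l < p$) lands on the component $\bar\psi_{p-1-l}$ as $X^m\phi$, and that $(l,m) \mapsto pm+l$ is a bijection of index sets; this does work out. Two observations on what the different approaches buy: your argument nowhere uses $(S_2)$, so it actually proves the stronger statement that $R$ $\F$-finite and $\F$PI implies $R[X]$ is $\F$PI with no Serre-condition hypothesis; on the other hand, the paper's route is shorter once the canonical-module machinery is available, and the auxiliary fact $L \cong K_R \otimes_R R[X]_\fP$ is of independent interest.
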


\begin{proof}
Note that our conditions imply that $R$ admits a canonical ideal $K_R$, say. Let $\fP$ be a maximal ideal of $R[X]$ and set $\fp := \fP \cap R $. By \cite[Proposition 3.3 (b)]{MA14}, it is enough to show that $R[X]_\fP$ is $\F$PI. There is an isomorphism $R[X]_\fP \cong (R_{\fp}[X])_{\fQ}$, where $\fQ$ is a maximal ideal of $R_{\fp}[X]$ and $\fQ \cap R_{\fp} = \fp R_{\fp}$. Therefore one can assume that $(R, \fm)$ is a local ring and $\fP$ maximal ideal of $R[X]$ such that $\fP \cap R = \fm$. Thus, by \cite[Korollar 5.21]{HK71}, $R[X]_\fP$ admits a canonical module $L$, say, and 
	\begin{equation}\tag{\ref{Canonical}.1}
	L \cong K_R \otimes_R R[X]_\fP. 
	\end{equation}
	There is a faithfully flat homomorphism $R \rightarrow R[X]_\fP$ whose fibres are Gorenstein and so $R[X]_\fP$ is $(S_2)$ and generically Gorenstein. This implies that $L$ is a height one ideal by \cite[Proposition 2.4]{LMa14}. Now, we have $\widetilde{\F}_{R[X]_\fP} (R[X]_\fP) \cong \Hom_{R[X]_\fP}({L}^{[p]}, L)$ by \cite[Korollar 5.9]{HE74} (note that the same argument in the proof of Korollar 5.9 works without Cohen-Macaulayness on $R$). Hence from (\ref{Canonical}.1) we get the following series of isomorphisms
	\[\begin{array}{lllll}
		\Hom_{R[X]_\fP}((K_R \otimes_R R[X]_{\fP})^{[p]}, K_R \otimes_R R[X]_\fP) &\cong \Hom_{R[X]_\fP}(((K_R \otimes_R R[X])^{[p]})_{\fP}, (K_R \otimes_R R[X])_{\fP})\\
		&\cong (\Hom_{R[X]}({K_R}^{[p]}[X], K_R[X] ))_{\fP} \\
		&\cong (\Hom_R({K_R}^{[p]}, K_R)[X]) _{\fP}\\ &\cong (\widetilde{\F}(R)[X])_{\fP}.
	\end{array}\]
	As $R$ is $\F$PI, $\widetilde{\F}(R) \cong R$ by Proposition \ref{aa} and therefore, $\widetilde{\F}_{R[X]_\fP} (R[X]_\fP) \cong R[X]_\fP$. The result follows by Proposition \ref{aa}.
\end{proof}
We next deduce that the condition $R$ to be $\F$PI is equivalent to a functorial property. More precisely, the following theorem.   
  
\begin{thm} \label{thmFF}
Let $(R, \fm)$ be an $\F$-finite local ring. Then $R$ satisfies $\emph\F$PI if and only if  $\emph\F^e(-)^{*}$ and   $\widetilde{\emph\F}^e (-^{*})$ are equivalent functors on the category of finitely generated $R$-modules for any $e >0$.
\end{thm}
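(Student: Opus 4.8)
The plan is to reduce the statement to Proposition~\ref{aa}, which already gives $R$ is $\F$PI $\iff \widetilde{\F}(R)\cong R$, and to do this by evaluating the two candidate functors on the rank-one free module $R$ and tracking how a natural transformation between them behaves under that evaluation. First I would record the two natural maps involved. For any finitely generated $R$-module $M$ there is a canonical map
\[
\eta_M\colon \F^e(M)^{*} = \Hom_R(M\otimes_R {^{f^e}\hspace{-0.5mm}R},\,R)\longrightarrow \Hom_R({^{f^e}\hspace{-0.5mm}R},\,\Hom_R(M,R)) = \widetilde{\F}^e(M^{*}),
\]
obtained from Hom--tensor adjunction (or equivalently the standard ``swap'' map), and one checks this $\eta_M$ is $R$-linear for the module structures described in Notation~\ref{l1}(ii),(iii) and natural in $M$. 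So $\F^e(-)^{*}$ and $\widetilde{\F}^e(-^{*})$ are equivalent functors on finitely generated modules precisely when $\eta_M$ is an isomorphism for every such $M$; the content of the theorem is that this holds for all $e>0$ iff it holds on $M=R$ for $e=1$, iff $R$ is $\F$PI.

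For the ``only if'' direction, suppose $R$ is $\F$PI. By Proposition~\ref{aa} we have $\widetilde{\F}(R)\cong R$, and iterating (using that $\F$PI for $R$ yields $\F$PI after any number of Frobenius applications, or directly re-running the Marley argument with $f^e$ in place of $f$) we get $\widetilde{\F}^e(R)\cong R$ for all $e>0$; since $\F^e(R)\cong R$ as well, $\eta_R$ is the map $R^{*}\to\widetilde{\F}^e(R^{*})$, i.e.\ $R\to R$, and one verifies it is the identity up to unit, hence an isomorphism. Then I would promote this from $M=R$ to arbitrary finitely generated $M$: choose a finite free presentation $F_1\to F_0\to M^{*}\to 0$, wait---better to present $M$ itself, $G_1\to G_0\to M\to 0$, apply the right-exact $\F^e$ and then the left-exact $(-)^{*}$, and separately apply $(-)^{*}$ then the left-exact $\widetilde{\F}^e$ to the presentation of $M$; the naturality square for $\eta$ together with $\eta_{G_0},\eta_{G_1}$ being isomorphisms and a diagram chase (five-lemma style, using left-exactness of both $(-)^{*}$ after $\F^e$ and $\widetilde{\F}^e$ after $(-)^{*}$) forces $\eta_M$ to be an isomorphism. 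Here one must be a little careful that $\F^e(G_i)^{*}$ really is computed by applying $(-)^{*}$ to an exact sequence; since $\F^e G_1\to \F^e G_0\to \F^e M\to 0$ is exact and $(-)^{*}=\Hom_R(-,R)$ is left exact, we get the needed left-exact row, and likewise on the other side, so the comparison is between the kernels of two maps of free-module duals.

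For the ``if'' direction, assume $\F^e(-)^{*}\simeq\widetilde{\F}^e(-^{*})$ for all $e>0$; evaluate at $M=R$ and $e=1$: since $\F(R)\cong R$ and $R^{*}\cong R$, the equivalence gives $R\cong R^{*}\cong\F(R)^{*}\cong\widetilde{\F}(R^{*})\cong\widetilde{\F}(R)$, and Proposition~\ref{aa} then yields that $R$ is $\F$PI. I expect the main obstacle to be the promotion step in the ``only if'' direction: one needs that the natural transformation $\eta$ is genuinely natural (so that the relevant squares commute) and that both composite functors $\F^e(-)^{*}$ and $\widetilde{\F}^e(-^{*})$ send a free presentation of $M$ to an exact complex computing them on $M$, i.e.\ that no higher-derived-functor correction terms intervene; granting left-exactness of $\Hom_R(-,R)$ this goes through, but it is the place where the hypotheses (finite generation, and implicitly $\F$-finiteness so that $\widetilde{\F}^e$ is well-behaved on finitely generated modules) are actually used. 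The case $e>1$ is identical with $f^e$ replacing $f$ throughout, so no extra work beyond bookkeeping is needed there.
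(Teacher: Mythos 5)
There is a genuine gap, and it is located earlier than you expected: the ``canonical map $\eta_M$'' you propose does not exist. The Hom--tensor adjunction (or the swap map) does not produce a natural $R$-linear map $\F^e(M)^{*}\to\widetilde{\F}^e(M^{*})$, because the two module structures on ${^{f^e}\hspace{-0.5mm}R}$ are genuinely different. Concretely, given $\phi\in\Hom_R(M\otimes_R{^{f^e}\hspace{-0.5mm}R},R)=\F^e(M)^{*}$, the swap rule $\eta_M(\phi)(s)(m)=\phi(m\otimes s)$ produces for each $s$ an additive map $M\to R$, but that map is not $R$-linear: since $rm\otimes s=m\otimes r^{p^e}s$ in $M\otimes_R{^{f^e}\hspace{-0.5mm}R}$, one gets $\eta_M(\phi)(s)(rm)=r^{p^e}\,\eta_M(\phi)(s)(m)$, not $r\,\eta_M(\phi)(s)(m)$. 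So $\eta_M(\phi)(s)\notin M^{*}$ and $\eta_M$ does not land in $\widetilde{\F}^e(M^{*})$. Already at $M=R$ this is visible: under $\F^e(R)\cong R$, the identity of $R^{*}$ corresponds to $\phi_0(x\otimes s)=x^{p^e}s$, whose swap is the identity map $s\mapsto s$, and that map fails the twisted-linearity condition $\theta(r^{p^e}s)=r\theta(s)$ defining $\widetilde{\F}^e(R)=\Hom_R({^{f^e}\hspace{-0.5mm}R},R)$. What the adjunction \emph{does} give is $\F^e(M)^{*}\cong\Hom_R(M,{^{f^e}\hspace{-0.5mm}R})$ (a twisted-Hom, ``Cartier-dual'' type object), which is a different functor from $\widetilde{\F}^e(M^{*})$. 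Indeed, a canonical natural transformation that was automatically an isomorphism on free modules would make the theorem vacuous, since it would hold with no condition on $R$; the content of the statement is precisely that the comparison isomorphism exists \emph{only} after fixing an isomorphism $\tau\colon\widetilde{\F}^e(R)\cong R$, which requires $\F$PI.

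Because there is no a priori natural transformation, your promotion step also cannot start: you have no naturality squares to chase. The paper handles this by constructing, for a fixed presentation $R^{m'}\xrightarrow{(a_{ij})}R^m\to M\to 0$, an isomorphism $\tau_M\colon\widetilde{\F}^e(M^{*})\to\F^e(M)^{*}$ from $\tau$ and the presentation, and the key nontrivial input is Herzog's Korollar 5.5: under the identification $\tau$, the map $\widetilde{\F}^e$ applied to the matrix $(a_{ji})$ becomes the matrix $(a_{ji}^{p^e})$, which is exactly what $\F^e$ followed by $(-)^{*}$ produces. This Frobenius-specific fact is what makes the two dualized presentations match; it is not a consequence of adjunction. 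Naturality of the resulting $\tau_M$ is then proved afterward by a separate cube diagram argument, using that $\F^e(M)^{*}\hookrightarrow R^m$ is injective. Your overall shape (reduce to $M=R$, then pass to arbitrary finitely generated $M$ by presentations, with the ``if'' direction obtained by evaluating at $R$) matches the paper's, and your ``if'' direction is fine; the gap is the missing construction of the comparison map, which must be built from $\tau$ and Korollar 5.5 rather than pulled from adjunction.
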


\begin{proof}
Assume that $R$ satisfies $\F$PI. By Proposition \ref{aa}, 
$\widetilde{\F}(R) \cong R$ and so $\widetilde{\F}^e(R)\overset{\tau}{\cong} R$ for any $e>0$. Let $M$ be a finitely generated $R$-module with a finite presentation $R^{m'} \overset{(a_{ij})}{\longrightarrow} R^m \rightarrow M \rightarrow 0$, where $(a_{ij})$ is an $m\times m'$ matrix with entries  in $\fm$. Applying $\F^e$ and dualizing by $(-)^{*}$, we obtain the exact sequence 
$0 \rightarrow \F^{e}(M)^{*} \rightarrow R^m \xrightarrow{(a_{ji}^{p^e})} R^{m'}$.
Now by dualizing the minimal presentation, we have the exact sequence $0 \rightarrow M^{*} \rightarrow R^m \xrightarrow{(a_{ji})} R^{m'}$. Applying $\widetilde{\F}^{e}$, we obtain an exact sequence $ 0 \rightarrow \widetilde{\F}^e (M^{*}) \rightarrow \widetilde{\F}^e (R^m) \xrightarrow{\widetilde{\F}^e (a_{ji})} \widetilde{\F}^e (R^{m'}) $ and $\tau^{m'}\widetilde{\F}^e(a_{ji})=(a_{ji}^{p^e})\tau^m$ (see the proof of \cite[Korollar 5.5]{HE74}). Hence there exists a map $\widetilde{\F}^e (M^{*}) \overset{\tau_M}{\longrightarrow} \F^e (M)^{*}$ making the diagram 
\[\begin{tikzpicture}[descr/.style={fill=white,inner sep=1.5pt}]
\matrix (m) [
matrix of math nodes,
row sep=2.5em,
column sep=4em,
text height=1.5ex, text depth=0.25ex
]
{0 & \widetilde{\F}^e (M^{*}) & \widetilde{\F}^e (R^m) & \widetilde{\F}^e (R^{m'})  \\ 0 & \F^e (M)^{*} & R^m & R^{m'} \\ };
\path[overlay,->, font=\scriptsize,>=latex]
(m-1-1) edge  (m-1-2)
(m-1-2) edge  (m-1-3)
(m-1-2)  edge [densely dotted] node[right]{$\tau_M$}  (m-2-2)
(m-1-3) edge node[above]{$\widetilde{\F}^e(a_{ji})$} (m-1-4)
(m-2-1) edge  (m-2-2)
(m-2-2) edge  (m-2-3)
(m-2-3) edge node[above]{$(a_{ji}^{p^e})$}(m-2-4)
(m-1-3) edge node[right]{$\tau ^m$} (m-2-3)
(m-1-4) edge node[right]{$\tau ^{m'}$} (m-2-4)
;\end{tikzpicture}\] commutative with isomorphisms $\tau^m$ and $\tau^{m'}$ so that $\tau_M$ is an isomorphism.  

Let $g : M \rightarrow N$ be an $R$-homomorphism of finitely generated $R$-modules and let  $\mu: R^m\twoheadrightarrow M$ and $\nu:R^n\twoheadrightarrow N$ be some epimorphisms. Then there exists an $n\times m$ matrix $(r_{ij}):R^m\rightarrow R^n$ such that $g\mu=\nu (r_{ij})$. 
Applying the functors $\F^e(-)^{*}$ and $\widetilde{\F}((-)^{*})$ implies the diagram

\[\begin{tikzpicture}[descr/.style={fill=white,inner sep=1.5pt}]
\matrix (m) [
matrix of math nodes,
row sep=1.25em,
column sep=1.25em,
text height=1.5ex, text depth=0.25ex
]
{\widetilde{\F}^e(R^n) &  \mbox{}&\mbox{} & \mbox{} & \widetilde{\F}^e(R^m) \\
\mbox{} & \widetilde{\F}^e(N^{*}) &\mbox{}& \widetilde{\F}^e(M^{*}) & \mbox{} \\
\mbox{} &\mbox{}&0&\mbox{}&\mbox{}\\
\mbox{}& \F ^e(N)^{*} &\mbox{}& \F ^e(M)^{*}  & \mbox{} \\
R^n & \mbox{} &\mbox{}& \mbox{} & R^m. \\ };
\path[overlay,->, font=\scriptsize,>=latex]
(m-1-1) edge (m-1-5)
(m-2-2) edge (m-2-4)
(m-2-4) edge (m-1-5)
(m-2-2) edge (m-4-2)
(m-2-4) edge (m-4-4)
(m-2-2) edge (m-1-1)
(m-4-2) edge (m-4-4)
(m-4-4) edge (m-5-5)
(m-3-3) edge[densely dotted] (m-4-4)
(m-5-1) edge node[above]{$(r_{ji}^{p^e})$} (m-5-5) 
(m-1-1) edge  node[right]{$\cong $}  (m-5-1)
(m-1-5) edge  node[right]{$\cong $}  (m-5-5)
(m-4-2) edge (m-5-1)
;
\end{tikzpicture}\]

All faces of the diagram, except the front one, are clearly commutative. As the map $\F^e(M)^{*} \rightarrow R^m$ is injective, it follows that the front face is also commutative.  Therefore $\F^e(-)^{*}$ and $\widetilde{\F}^e (-^{*})$ are equivalent functors. 

For the converse, we have  $R \cong \F(R)^{*} \cong \widetilde{\F} (R^{*}) \cong \widetilde{\F} (R)$. By Proposition \ref{aa}, $R$ is $\F$PI.
\end{proof}

We may conclude $\F^e(M)^{*} \cong \widetilde{\F}^e (M^{*})$ for any $R$-module which is not necessarily finitely generated. More precisely we state the following. 
\begin{cor} \label{cor3.5}
Let $(R, \fm)$ be an $\F$-finite $\emph\F$PI local ring. Then $\emph\F^e(M)^{*} \cong \widetilde{\emph\F}^e (M^{*})$, for any $R$-module $M$ and any $e>0$.
\end{cor}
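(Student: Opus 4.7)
The plan is to extend the argument of Theorem \ref{thmFF} from finitely generated modules to arbitrary modules, by working with an unrestricted free presentation and exploiting the fact that $\widetilde{\F}^e$ preserves arbitrary direct products. Fix an $R$-module $M$ and choose a free presentation
\begin{equation*}
R^{(J)} \xrightarrow{\phi} R^{(I)} \to M \to 0
\end{equation*}
with $I, J$ arbitrary index sets, where $\phi$ is represented by a column-finite matrix $(a_{ij})$.

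First I would apply the right exact functor $\F^e$. Since $\F^e$ commutes with arbitrary direct sums and $\F^e(R) \cong R$, one obtains $R^{(J)} \xrightarrow{\phi^{[p^e]}} R^{(I)} \to \F^e(M) \to 0$, where $\phi^{[p^e]}$ has matrix $(a_{ij}^{p^e})$. Dualizing by $(-)^{*}$ then gives the left exact sequence
\begin{equation*}
0 \to \F^e(M)^{*} \to R^{I} \xrightarrow{(a_{ij}^{p^e})} R^{J}.
\end{equation*}

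On the other side, I would first dualize the original presentation to obtain $0 \to M^{*} \to R^{I} \xrightarrow{\phi^{*}} R^{J}$ and then apply $\widetilde{\F}^e$. The functor $\widetilde{\F}^e(-) = \Hom_R({^{f^e}R}, -)$ is left exact and preserves arbitrary products. Moreover, $\widetilde{\F}^e(R) \cong R$, which is immediate from Theorem \ref{thmFF} applied to the finitely generated module $R$, combined with $\F^e(R) \cong R$ and $R^{*} \cong R$. Hence $\widetilde{\F}^e(R^{I}) \cong R^{I}$ and $\widetilde{\F}^e(R^{J}) \cong R^{J}$, producing a left exact sequence
\begin{equation*}
0 \to \widetilde{\F}^e(M^{*}) \to R^{I} \xrightarrow{\widetilde{\F}^e(\phi^{*})} R^{J}.
\end{equation*}

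The final step is to identify the two connecting maps $R^{I} \to R^{J}$. The naturality computation from the proof of Theorem \ref{thmFF} (which rests on \cite[Korollar 5.5]{HE74}) shows that under the canonical isomorphism $\widetilde{\F}^e(R) \cong R$, the functor $\widetilde{\F}^e$ sends the $R$-linear map with matrix $(a_{ij})$ to the map with matrix $(a_{ij}^{p^e})$. As this identification is entrywise and both $(-)^{*}$ and $\widetilde{\F}^e$ commute with the relevant products, it carries over verbatim to arbitrary index sets. Consequently both $\F^e(M)^{*}$ and $\widetilde{\F}^e(M^{*})$ are realized as the kernel of the same map $R^{I} \to R^{J}$, yielding the desired isomorphism. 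The main obstacle is this last coordinatewise identification for infinite $I, J$; it ultimately reduces to the naturality of the isomorphism $\widetilde{\F}^e(R) \cong R$ with respect to multiplication by elements of $R$, which is guaranteed by its construction inside the proof of Theorem \ref{thmFF}.
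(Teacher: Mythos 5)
Your proof is correct, but it is a genuinely different argument from the one in the paper. The paper writes $M$ as a filtered colimit $\varinjlim_i M_i$ of its finitely generated submodules, applies Theorem~\ref{thmFF} to each $M_i$, and then shuttles the isomorphism through the chain $\F^e(M)^* \cong \bigl(\varinjlim_i \F^e(M_i)\bigr)^* \cong \varprojlim_i \F^e(M_i)^* \cong \varprojlim_i \widetilde{\F}^e(M_i^*) \cong \widetilde{\F}^e\bigl(\varprojlim_i M_i^*\bigr) \cong \widetilde{\F}^e(M^*)$, using that $\F^e$ commutes with direct limits, $(-)^*$ turns colimits into limits, and $\widetilde{\F}^e$ preserves limits; this route leans crucially on the \emph{naturality} of the isomorphism $\tau_M$ established in Theorem~\ref{thmFF}, which is what makes the limit terms compatible. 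You instead bypass the limit formalism by choosing one free presentation $R^{(J)} \to R^{(I)} \to M \to 0$ and showing that both $\F^e(M)^*$ and $\widetilde{\F}^e(M^*)$ arise as the kernel of the same explicit map $R^I \to R^J$, using that $\widetilde{\F}^e$ commutes with products, that the dual of a column-finite matrix map is well defined, and the key identity from Herzog (Korollar 5.5) that $\widetilde{\F}^e$ carries multiplication by $a$ to multiplication by $a^{p^e}$ under $\tau$; the computation one must carefully check is that $a\theta = \theta \cdot a^{p^e}$ in $\widetilde{\F}^e(R)$, which follows since $\theta$ is a left $R$-module homomorphism and $R$ is commutative. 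Both routes are sound; yours trades the categorical limit manipulation and the naturality statement for an explicit matrix calculation on possibly infinite free modules, and only requires the isomorphism of Theorem~\ref{thmFF} for $M=R$ (equivalently Proposition~\ref{aa}) rather than for every finitely generated module. A small stylistic note: citing Theorem~\ref{thmFF} to get $\widetilde{\F}^e(R) \cong R$ is slightly roundabout, since this already follows directly from Proposition~\ref{aa} by iteration, exactly as the paper does.
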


\begin{proof}
One can write $M \cong\underset{i}{\varinjlim} M_i$, where $M_i$'s are finitely generated $R$-submodules of $M$. Now by the following series of isomorphisms we complete the proof:
\[\begin{array}{rlllllllllll}
\F^e (M)^{*} &\cong (\F^e (\underset{i}{\varinjlim} M_i))^* &\cong (\underset{i}{\varinjlim} \F^e(M_i))^*\ \\ 
&\cong\underset{i}{ \varprojlim} (\F^e(M_i)^*)\ & & \text{( by \cite[Proposition 5.26]{JR79})}\\ &\cong\underset{i}{ \varprojlim} \widetilde{\F}^e (M_i^*)\ && \text{(by Theorem \ref{thmFF})}\\  &\cong \widetilde{\F}^e( \underset{i}{ \varprojlim}(M_i^*))\ & & \text{( by \cite[Proposition 5.21]{JR79})} \\ 
&\cong  \widetilde{\F}^e( (\underset{i}{\varinjlim}M_i)^*)\ &\cong \widetilde{\F}^e (M^{*}).
\end{array}\]
\end{proof}
A local ring $R$, with a canonical module $K_R$, is called {\it quasi-Gorenstein} if $K_R \cong R$. It is easy to see that $R$ is Gorenstein if and only if $R$ is Cohen-Macaulay and quasi-Gorenstein. Every quasi-Gorenstein ring is $\F$PI (see \cite[Proposition 3.6]{MA14}). By applying Theorem \ref{thmFF}, we show that for a ring to be $\F$PI is equivalent to be quasi-Gorenstein under some conditions. 

Recall that an $R$-module $M$ for which the natural map $M\to\Hom_R(\Hom_R(M, R), R)$ is an isomorphism is called reflexive.

\begin{thm}\label{thm3}
Let $(R, \fm)$ be an $\F$-finite local ring which is $(S_2)$ and its canonical module $K_R$ is reflexive. Suppose that $K_R \otimes_R  \emph\F^{e_0}(K_R^{*})$ is $(S_2)$,  for some $e_0>0$. Then the following statements are equivalent. 
\begin{enumerate}[\em(i)]
\item $R$ is quasi-Gorenstein.
\item $R$ is $\emph\F$PI.
\item For any $R$-module $M$ and any $e >0$, we have $\emph\F^e(M)^{*} \cong \widetilde{\emph\F}^e (M^{*})$.
\item   $\emph\F^{e_0}(K_R^{*})^{*} \cong \widetilde{\emph\F}^{e_0} (K_R^{**})$.
\end{enumerate}
\end{thm}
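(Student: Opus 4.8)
The plan is to prove the cycle of implications
$(i)\Rightarrow(ii)\Rightarrow(iii)\Rightarrow(iv)\Rightarrow(i)$, where the first three arrows are essentially free and all the work lives in the last one. For $(i)\Rightarrow(ii)$ I would simply invoke \cite[Proposition 3.6]{MA14}, which says every quasi-Gorenstein ring is $\F$PI. For $(ii)\Rightarrow(iii)$ I would appeal to Corollary \ref{cor3.5}, which already gives $\F^e(M)^{*}\cong\widetilde{\F}^e(M^{*})$ for arbitrary $M$ once $R$ is $\F$PI. The implication $(iii)\Rightarrow(iv)$ is the trivial specialization $M=K_R^{*}$ together with $M^{*}=K_R^{**}$; note that since $K_R$ is reflexive, $K_R^{**}\cong K_R$, which will be convenient in the last step.

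The substance is $(iv)\Rightarrow(i)$. The plan is to unwind both sides of the isomorphism in $(iv)$ and show it forces $K_R\cong R$. First I would use reflexivity of $K_R$ to replace $K_R^{**}$ by $K_R$, so that $(iv)$ reads $\F^{e_0}(K_R^{*})^{*}\cong\widetilde{\F}^{e_0}(K_R)$. Next I want to compute $\F^{e_0}(K_R^{*})^{*}$ more explicitly. The idea is that $\F^{e_0}(K_R^{*})^{*}=\Hom_R(\F^{e_0}(K_R^{*}),R)$, and since $K_R^{*}$ and $K_R$ are related by $(S_2)$-duality (the natural map $R\to\Hom_R(K_R,K_R)$ being an isomorphism, as recalled in Section 2 for $(S_2)$ rings), I expect $\Hom_R(\F^{e_0}(K_R^{*}),R)$ to be comparable to $K_R\otimes_R\F^{e_0}(K_R^{*})$ or to a Hom into $K_R$, up to the $(S_2)$-ification. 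This is exactly where the hypothesis that $K_R\otimes_R\F^{e_0}(K_R^{*})$ is $(S_2)$ enters: it lets me identify a module that is a priori only known in codimension $\le 1$ with its canonical ($(S_2)$) hull, so that a map which is an isomorphism in codimension $1$ is actually an isomorphism. Concretely, I would like to establish a chain of the form
\[
K_R\otimes_R\F^{e_0}(K_R^{*})\ \cong\ \F^{e_0}(K_R^{*})^{*}\ \cong\ \widetilde{\F}^{e_0}(K_R),
\]
where the first isomorphism uses the $(S_2)$ hypothesis plus the fact that $\Hom_R(-,R)\cong\Hom_R(-,K_R)$ twisted appropriately on reflexive modules, and the second is $(iv)$.

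Then I would bring in Herzog's interplay between $\F$ and $\widetilde{\F}$. Using Proposition \ref{DFtildF}(ii) (or its non-Cohen-Macaulay variant via \cite[Korollar 5.9]{HE74}, cited already in the proof of Proposition \ref{Canonical}), $\widetilde{\F}^{e_0}(K_R)$ can be identified with something like $\D_R(\F^{e_0}(\D_R(K_R)))$ or, after passing to completion and using $\D_R(K_R)\cong\H_\fm^{d}(R)$-type descriptions, with a module built from $\H_\fm^{\bullet}$. The upshot I am aiming for is that after all identifications, $(iv)$ becomes $K_R\otimes_R\F^{e_0}(K_R^{*})\cong K_R$ (or $\F^{e_0}(K_R^{*})\cong R$), and then localizing at a minimal prime, or using rank considerations — $K_R$ has rank $1$ on the generically Gorenstein locus, which $\F$PI or the present setup supplies — forces $\F^{e_0}(K_R^{*})$ to be free of rank one, hence $K_R^{*}\cong R$ (by the argument that $\F^{e}$ of a module being free detects freeness, cf. Remark \ref{rem2.1} in the Artinian case and the standard descent), and finally $K_R\cong K_R^{**}\cong R^{*}\cong R$, i.e. $R$ is quasi-Gorenstein.

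\textbf{Main obstacle.} The delicate point is the first identification $\F^{e_0}(K_R^{*})^{*}\cong K_R\otimes_R\F^{e_0}(K_R^{*})$ and, more generally, controlling the difference between $\Hom_R(-,R)$ and $\Hom_R(-,K_R)$ on modules that are not reflexive but only satisfy a codimension-$1$ condition; this is precisely what the hypothesis "$K_R\otimes_R\F^{e_0}(K_R^{*})$ is $(S_2)$" is designed to absorb, so the real work is in showing that with this $(S_2)$ input a codimension-$1$ isomorphism upgrades to a genuine one, and that the resulting rank-$1$ reflexive module which is also a homomorphic/Frobenius image of a free module must be free. The passage to completion (legitimate since all the properties involved — $\F$-finiteness, $(S_2)$, existence and reflexivity of $K_R$, quasi-Gorensteinness — descend along $R\to\widehat R$) should be used freely to invoke the Matlis-duality form of Herzog's results.
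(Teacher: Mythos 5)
Your outline of the cycle $(i)\Rightarrow(ii)\Rightarrow(iii)\Rightarrow(iv)$ matches the paper exactly (Proposition 3.6 of \cite{MA14}, then Corollary \ref{cor3.5}, then specialization). The trouble is entirely in $(iv)\Rightarrow(i)$, where your route diverges from the paper's and has real gaps.

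The paper's argument is short and hinges on one identification you do not make: $\widetilde{\F}^{e_0}(K_R)\cong K_R$, which is \cite[Satz 5.12]{HK71} (Hom from a finite ring extension into the canonical module is again a canonical module). Combined with reflexivity of $K_R$, hypothesis $(iv)$ immediately yields $\F^{e_0}(K_R^{*})^{*}\cong K_R$. The paper then applies $\Hom_R(K_R,-)$ and uses Hom--tensor adjointness,
\[
\Hom_R\bigl(K_R,\Hom_R(\F^{e_0}(K_R^{*}),R)\bigr)\cong\bigl(K_R\otimes_R\F^{e_0}(K_R^{*})\bigr)^{*},
\]
together with $\Hom_R(K_R,K_R)\cong R$ (from $(S_2)$) to get $\bigl(K_R\otimes_R\F^{e_0}(K_R^{*})\bigr)^{*}\cong R$. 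The $(S_2)$ hypothesis on $K_R\otimes_R\F^{e_0}(K_R^{*})$ is then fed into \cite[Theorem 3.10]{DEL19}, which says precisely that an $(S_2)$ module whose $R$-dual is free must itself be free; hence $K_R\otimes_R\F^{e_0}(K_R^{*})$ is free, and therefore $K_R$ is free.

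Your proposal instead tries to establish $\F^{e_0}(K_R^{*})^{*}\cong K_R\otimes_R\F^{e_0}(K_R^{*})$ directly. That isomorphism is not correct in general and is not what the adjointness computation gives you; the paper obtains an isomorphism between the \emph{duals} of these objects (one side being $R$), not between the objects themselves. Your later step — getting $K_R\otimes_R\F^{e_0}(K_R^{*})\cong K_R$ and then concluding $\F^{e_0}(K_R^{*})$ is free of rank one by ``rank considerations'' — is also a gap: having rank one on the generic locus does not make a finitely generated module free, and that is exactly the kind of codimension-one-to-global upgrade that \cite[Theorem 3.10]{DEL19} is invoked to supply. Finally, the ``Frobenius descent'' step you propose (from $\F^{e_0}(K_R^{*})\cong R$ to $K_R^{*}\cong R$) is justified in your sketch only via Remark \ref{rem2.1}, which is an Artinian statement; it requires separate argument in general. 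The Matlis-duality detour through Proposition \ref{DFtildF} and completion is also unnecessary once one has \cite[Satz 5.12]{HK71}. In short, you identified the right hypotheses ($(S_2)$ of the tensor product, reflexivity of $K_R$) as the engines of the argument, but the actual mechanism — dualize, apply adjointness to land on $(K_R\otimes\F^{e_0}(K_R^{*}))^{*}\cong R$, then invoke \cite[Theorem 3.10]{DEL19} — is missing, and what you propose in its place does not close.
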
 

\begin{proof}
(i)$\Rightarrow$(ii)  \cite[Proposition 3.6]{MA14}. 
	
(ii)$\Rightarrow$(iii) is clear by Theorem \ref{thmFF}. 

(iii)$\Rightarrow$(iv) is trivial.
	
(iv)$\Rightarrow$(i). We have  $\F^{e_0}(K_R^{*})^{*} \cong \widetilde{\F}^{e_0} (K_R^{**}) \cong \widetilde{\F}^{e_0} (K_R)\cong K_R$ by \cite[Satz 5.12]{HK71}.  As $R$ is $(S_2)$, 
$\Hom_R(K_R, \F^{e_0}(K_R^{*})^{*} ) \cong \Hom_R (K_R, K_R))\cong R$. 
Using adjointness, we have $(K_R \otimes_R \F^{e_0} (K_R^{*}))^{*} \cong R $. As, by our assumption $K_R \otimes_R \F^{e_0} (K_R^{*})$ is  $(S_2)$, it follows that  $K_R \otimes_R \F^{e_0} (K_R^{*})$ is free (see \cite[Theorem 3.10]{DEL19}). Therefore $K_R$ is free, as desired.   
\end{proof}

%Recall that an $R$-module $M$ for which the natural map $M\to\Hom_R(\Hom_R(M, R), R)$ is an isomorphism is called reflexive.
\begin{prop} \label{propFPIFPR}
Assume that $R$ is an $\F$-finite  $\emph\F$PI ring. Then, for any reflexive $R$-module $M$, $\widetilde{\emph\F}(M)$ is reflexive.
\end{prop}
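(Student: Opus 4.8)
The plan is to reduce to the case where $(R,\fm)$ is local, to rewrite $\widetilde{\F}(M)$ as an $R$-dual by means of Corollary~\ref{cor3.5}, and then to establish the auxiliary fact that over an $\F$PI ring the $R$-dual $N^{*}=\Hom_R(N,R)$ of every finitely generated module $N$ is reflexive.

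First I would reduce to the local case. Since $R$ is $\F$-finite, ${^f\hspace{-0.5mm}R}$ is finitely presented, so $\widetilde{\F}(-)$ commutes with localization and $\widetilde{\F}(N)_{\fp}\cong\widetilde{\F}_{R_{\fp}}(N_{\fp})$ for every prime $\fp$; moreover $R_{\fp}$ is again $\F$-finite and $\F$PI (\cite[Proposition 3.4]{MA14}), and a finitely generated module is reflexive if and only if it is so after localizing at each maximal ideal, because forming the double dual commutes with localization for finitely generated modules over a Noetherian ring. Hence we may assume $(R,\fm)$ is local, $\F$-finite and $\F$PI, so that Corollary~\ref{cor3.5} is available.

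Next, since $M$ is reflexive the canonical map $M\to M^{**}$ is an isomorphism, so applying Corollary~\ref{cor3.5} to the finitely generated module $M^{*}$ with $e=1$ gives
\[
\widetilde{\F}(M)\cong\widetilde{\F}(M^{**})=\widetilde{\F}\bigl((M^{*})^{*}\bigr)\cong\F(M^{*})^{*}.
\]
As $\F(M^{*})$ is finitely generated, it then suffices to establish the auxiliary fact above. The plan for that is standard: for finitely generated $N$, let $h\colon N\to N^{**}$ be the canonical map with cokernel $C$; dualizing the factorization of $h$ through its image (note every homomorphism $N\to R$ vanishes on $\Ker h$) and using the triangle identity $h^{*}\circ\eta_N=\mathrm{id}_{N^{*}}$ for the canonical map $\eta_N\colon N^{*}\to N^{***}$, one obtains a split short exact sequence $0\to C^{*}\to N^{***}\xrightarrow{h^{*}}N^{*}\to 0$ with section $\eta_N$. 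Hence $N^{*}$ is reflexive if and only if $C^{*}=\Hom_R(C,R)=0$, i.e.\ if and only if $C_{\fp}=0$ for every $\fp\in\Ass R$ (since $\Ass\Hom_R(C,R)=\Supp C\cap\Ass R$).

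Finally I would invoke that an $\F$PI ring is generically Gorenstein (Proposition~\ref{m1}) and has no embedded primes (\cite[Proposition 3.12]{MA14}); hence $\Ass R=\Min R$ and $R_{\fp}$ is Artinian Gorenstein for each $\fp\in\Ass R$, so that $R_{\fp}\cong\E_{R_{\fp}}(R_{\fp}/\fp R_{\fp})$, the functor $\Hom_{R_{\fp}}(-,R_{\fp})$ is a faithful exact duality, and $N_{\fp}\to N_{\fp}^{**}$ is an isomorphism. This forces $C_{\fp}=0$ for every $\fp\in\Ass R$, hence $C^{*}=0$, so $N^{*}$ is reflexive and in particular $\widetilde{\F}(M)\cong\F(M^{*})^{*}$ is reflexive. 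The hard part is precisely this last step: the Frobenius twist is entirely absorbed into Corollary~\ref{cor3.5}, and what is left is the commutative-algebra statement that duals of finitely generated modules are reflexive over an $\F$PI ring, which rests on the two non-formal inputs that such a ring is generically Gorenstein and satisfies $(S_1)$.
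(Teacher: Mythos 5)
Your proof is correct, but it follows a genuinely different route from the paper's. The paper's argument is shorter and more direct: after the same local reduction, it observes that a reflexive $M$ sits in an exact sequence $0\to M\to R^n\to R^m$ (dualize a presentation of $M^*$), applies $\widetilde{\F}$ using left-exactness together with $\widetilde{\F}(R)\cong R$ (Proposition~\ref{aa}) to get $0\to\widetilde{\F}(M)\to R^n\to R^m$, and then cites Auslander--Bridger (Proposition~4.21) together with $R$ being generically Gorenstein (Proposition~\ref{m1}) to conclude that this second syzygy is reflexive. You instead funnel everything through Corollary~\ref{cor3.5} (in fact Theorem~\ref{thmFF} with $e=1$ already suffices, since $M^*$ is finitely generated) to rewrite $\widetilde{\F}(M)\cong\F(M^*)^*$, and then prove from scratch the general lemma that over a generically Gorenstein ring with no embedded primes every finitely generated $R$-dual is reflexive, via the split exact sequence $0\to C^*\to N^{***}\to N^*\to 0$ and the identity $\Ass\Hom_R(C,R)=\Supp C\cap\Ass R$. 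What your route buys is self-containedness in the last step (no appeal to Auslander--Bridger) and a clean conceptual reading: the $\F$PI hypothesis converts $\widetilde{\F}$ into a dual, and duals are reflexive for the two stated reasons. What it costs is reliance on the heavier Theorem~\ref{thmFF}, whereas the paper only needs the much more elementary $\widetilde{\F}(R)\cong R$. One small notational slip: the section you use should be $\eta_{N^*}\colon N^*\to N^{***}$ (the unit at $N^*$), not $\eta_N$; and you should make explicit that the $(S_1)$ property of $\F$PI local rings, which you use to get $\Ass R=\Min R$, follows from localizing Marley's Proposition~3.12, since as stated that result concerns depth of the ring itself rather than of its localizations.
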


\begin{proof}
Note that $\widetilde{\F}(M)$ is reflexive if and only if $\widetilde{\F}(M_\fp)$ is reflexive $R_\fp$-module for all prime ideals $\fp$ of $R$. Thus, by \cite[Proposition 3.3 (c)]{MA14}, one may assume that $R$ is local.  
Suppose that $M$ is a reflexive $R$-module so that there exists an exact sequence of this form $0 \rightarrow M \rightarrow R^n \rightarrow R^m$. Applying $\widetilde{\F}$ and using the fact that $\widetilde{\F} (R) \cong R$ (see Proposition \ref{aa}), implies that $0 \rightarrow \widetilde{\F}(M) \rightarrow R^n \rightarrow R^m$. Note that $R$ is generically Gorenstein by Proposition \ref{m1} and so $\widetilde{\F}(M)$ is reflexive (see \cite[Proposition 4.21 (c)$\Rightarrow$(a)]{Aus69}).
\end{proof}
It is shown in \cite[Proposition 3.5]{MA14} that, for an $\F$PI ring $R$, if $I$ is an injective $R$-module then $\F(I) \cong I$. However, in such a ring, $\widetilde{\F}(M) \ncong M$ for some reflexive $R$-module $M$. To see this, consider a zero dimensional
$\F$PI ring $R$, so that $R$ is Gorenstein, and set $M$ to be a non-free $R$-module. If $\widetilde{\F}(M) \cong M$, by Proposition \ref{DFtildF}, we have $\F(\D_R(M)) \cong \D_R(M)$, where $\D_R(-) = \Hom_R(-,R)$. Iterating $\F$ and using Remark \ref{rem2.1} gives $\D_R(M)$ is free and so $M$ is free, a contradiction.

The above proposition motivates us to bring up the following definition. 

\begin{defn}
\em{An $\F$-finite ring $R$ is said to be $\widetilde{\F}$-{\it preserve reflexive}, denoted by $\widetilde{\F}$PR, if $\widetilde{\F} (M)$ is reflexive for any reflexive $R$-module $M$.}
\end{defn}

It is known that every $\F$PI ring is generically Gorenstein.
Here we show that the same is true for a ring satisfying $\widetilde{\F}$PR.
\begin{lem} \label{propFPR(S)}
Any $\widetilde{\emph\F}$PR ring is generically Gorenstein.
\end{lem}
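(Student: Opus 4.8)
The plan is to reduce the statement to a local question and then exploit the behaviour of $\widetilde{\F}$ on a minimal prime. First I would observe that being generically Gorenstein is a local property: $R$ is generically Gorenstein if and only if $R_\fp$ is Gorenstein for every minimal prime $\fp$, and localization commutes with $\widetilde{\F}$ for $\F$-finite rings (using $({^f\hspace{-0.5mm}R})_\fp\cong{^f\hspace{-0.5mm}(R_\fp)}$, as recalled after Definition~2.4 together with Notation~\ref{l1}). Moreover the property $\widetilde{\F}$PR localizes — a reflexive $R_\fp$-module can be realized as the localization of a reflexive $R$-module (or one argues directly that $\widetilde{\F}(M_\fp)$ reflexive for all $\fp$ detects reflexivity, as in the proof of Proposition~\ref{propFPIFPR}). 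Hence I may assume $(R,\fm)$ is local of dimension $0$, i.e.\ Artinian, and must show $R$ is Gorenstein.

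Next, in the Artinian local case I would use the structure of $\widetilde{\F}$ established in Remark~\ref{rem2.1}. The strategy is to feed a well-chosen reflexive module into the $\widetilde{\F}$PR hypothesis and derive that $R$ must be self-injective. A natural candidate is $M=R$ itself (always reflexive), forcing $\widetilde{\F}(R)$ to be reflexive; but over an Artinian ring every finitely generated module is reflexive exactly when $R$ is Gorenstein, so reflexivity of a single module is not automatically restrictive — I need to combine it with the explicit computation of $\widetilde{\F}$. The key computational input is that, for $R$ Artinian $\F$-finite, iterating $\widetilde{\F}$ (or its Matlis dual, via Proposition~\ref{DFtildF}) turns any module into a free module for $e\gg0$: dually, $\F^e(M)\cong R^{m}$ for $e\gg0$ by Remark~\ref{rem2.1}, and $\D_R(\widetilde{\F}^e(A))\cong\F^e(\D_R(A))$. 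So I would take $A=\E=\E_R(R/\fm)$, note $\D_R(\E)\cong\widehat R=R$, and track what reflexivity of the relevant modules forces.

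Concretely, here is the cleanest route I would pursue. Assume $R$ is Artinian, $\F$-finite, and $\widetilde{\F}$PR. Consider $\E=\E_R(R/\fm)$, which is a reflexive $R$-module precisely when $R$ is Gorenstein, so instead I work with modules known to be reflexive unconditionally, namely finitely generated free modules and syzygies thereof; but over an Artinian ring the only reflexive modules of the latter type that are "big enough" to detect Gorensteinness are, again, characterized by Gorensteinness. This circularity suggests the right move is to apply $\widetilde{\F}$ to $R$ and use Proposition~\ref{DFtildF}(i) with $A=\E$: $\D_R(\widetilde{\F}(\E))\cong\F(\D_R(\E))=\F(\widehat R)\cong\widehat R=R$, hence $\widetilde{\F}(\E)\cong\D_R(R)=\E$. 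Now $\E$ is reflexive as an $R$-module if and only if $R$ is Gorenstein; so I want to show $\widetilde{\F}$PR forces $\E$ itself to be reflexive. This I would get by noting that over an Artinian $\F$-finite ring $\E$ has finite length, so by Remark~\ref{rem2.1} $^{f^e}\hspace{-0.5mm}\E\cong\oplus^t k$ for $e\gg0$, whence $\widetilde{\F}^e(\E)=\Hom_R(\oplus^t k,\E)$; unwinding, $\widetilde{\F}^e(R)\cong\Hom_R(^{f^e}\hspace{-0.5mm}R,R)$ becomes free-detecting, and combined with reflexivity forced by $\widetilde{\F}$PR one pins down $\Hom_R(k,R)$ and thus $\mathrm{soc}$-dimension $=1$, i.e.\ $R$ Gorenstein.

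The main obstacle I anticipate is exactly making the last paragraph rigorous: showing that the $\widetilde{\F}$PR hypothesis, which a priori only asserts reflexivity is \emph{preserved}, actually \emph{produces} a module whose reflexivity is equivalent to Gorensteinness. The likely fix — and probably what the authors do — is to pick $M$ to be the $R$-dual $(R/\fm)^{*}=\Hom_R(k,R)$, or a first syzygy $\Omega$ of $k$, both of which are reflexive (being duals), push them through $\widetilde{\F}$, and use the torsion-free/reflexive detection together with $\widetilde{\F}(R)\cong\Hom_R({^f\hspace{-0.5mm}R},R)$ to force $R$ to satisfy $(S_1)$ on its minimal primes after localization — i.e.\ reduce "generically Gorenstein" to "$R_\fp$ is a field or Gorenstein Artinian" and rule out non-Gorenstein Artinian localizations by the length count $\ell_R({^{f^e}\hspace{-0.5mm}M})=q^\alpha\ell_R(M)$ of Remark~\ref{rem2.1}, which constrains the ranks in the presentation of $\widetilde{\F}(M)$ incompatibly with reflexivity unless the socle is simple. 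I would structure the final write-up around that length/rank bookkeeping after the reduction to the Artinian local case.
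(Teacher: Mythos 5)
Your reduction to localizing at a minimal prime $\fp$, and the recognition that the argument must ultimately force $\E_{R_\fp}(k(\fp))$ to be reflexive, are both correct, and so are the two main tools you reach for (Herzog's duality in Proposition~\ref{DFtildF} and the trivialization of $\F^e$ over Artinian rings from Remark~\ref{rem2.1}). However, the proposal never executes the key computation, and the substitute modules you reach for at the end do not work.

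The decisive step you are missing is to apply Proposition~\ref{DFtildF}(i) with $A = R_\fp$, not with $A = \E_{R_\fp}(k(\fp))$. Taking $A = \E$ gives $\widetilde{\F}(\E) \cong \E$, which --- as you yourself notice --- is circular, since $\E$ is not known to be reflexive and so the $\widetilde{\F}$PR hypothesis cannot be fed $\E$. Taking $A = R_\fp$ instead gives
\[
\D_{R_\fp}\bigl(\widetilde{\F}^e(R_\fp)\bigr) \cong \F^e\bigl(\D_{R_\fp}(R_\fp)\bigr) = \F^e\bigl(\E_{R_\fp}(k(\fp))\bigr),
\]
and by Remark~\ref{rem2.1} the right-hand side is a free $R_\fp$-module for $e \gg 0$. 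Dualizing back, $\widetilde{\F}^e(R_\fp) \cong \oplus^t \E_{R_\fp}(k(\fp))$ for $e \gg 0$, with $t \ge 1$. Now iterate the $\widetilde{\F}$PR hypothesis starting from $R$ to see that $\widetilde{\F}^e(R)$ is reflexive for every $e$, and localize: $\widetilde{\F}^e(R_\fp)$ is a reflexive $R_\fp$-module, hence so is its direct summand $\E_{R_\fp}(k(\fp))$. A reflexive module embeds in a free module, and an indecomposable injective direct summand of a free module over a local ring must be the ring itself, so $\E_{R_\fp}(k(\fp)) \cong R_\fp$ and $R_\fp$ is Gorenstein. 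That is the paper's argument, and it is what your proposal circles around without landing on.

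Two further concrete errors. First, the formula you write, $\widetilde{\F}^e(\E) = \Hom_R(\oplus^t k, \E)$, puts the wrong module in the first slot: by definition $\widetilde{\F}^e(\E) = \Hom_R({^{f^e}\hspace{-0.5mm}R}, \E)$, not $\Hom_R({^{f^e}\hspace{-0.5mm}\E}, \E)$. Second, the fallback modules you propose to feed into the $\widetilde{\F}$PR hypothesis --- $(R/\fm)^* = \Hom_R(k, R)$ or the first syzygy of $k$ --- are not reflexive in general over a non-Gorenstein Artinian local ring. Indeed $(R/\fm)^*$ is the socle of $R$, isomorphic to $k^r$ with $r$ the Cohen--Macaulay type, and since $(k^r)^* \cong k^{r^2}$ and $(k^r)^{**} \cong k^{r^3}$, the module $(R/\fm)^*$ is reflexive only when $r = 1$, i.e.\ exactly in the Gorenstein case you are trying to reach. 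So that route is circular as well.
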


\begin{proof}
Pick $\fp \in \Min R$.  By Proposition \ref{DFtildF}, $\D_{R_\fp} (\widetilde{\F}^e (R_{\fp})) \cong \F ^e (\E_{R_\fp} (k(\fp)))$ which is free $R_\fp$-module for sufficiently large $e$ because $\dim R_{\fp}=0$ (see Remark \ref{rem2.1}).  Thus $\widetilde{\F}^e (R_{\fp}) \cong \overset{\text{t}}\oplus \E_{R_\fp} (k(\fp))$ for some $t$. As $\widetilde{\F}^e(R)$ is reflexive, $\E_{R_\fp} (k(\fp))^{**} \cong \E_{R_\fp} (k(\fp))$ which implies that $\E_{R_\fp} (k(\fp)) \cong R_{\fp}$ and so $R_\fp$ is Gorenstein.
\end{proof}
\begin{cor} \label{GFtildF}
Let $R$ be an $\F$-finite Artinian local ring. The following statements are equivalent. 
\begin{enumerate}[\rm(i)]
\item $R$ is Gorenstein.
\item $R$ is $\emph\F$PI.
\item $R$ is $\widetilde{\emph\F}$PR.
\end{enumerate}
\end{cor}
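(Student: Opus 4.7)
The plan is to close the loop (i) $\Rightarrow$ (ii) $\Rightarrow$ (iii) $\Rightarrow$ (i), using results already established in the paper together with the fact that being Artinian local collapses ``generically Gorenstein'' onto ``Gorenstein''.

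First I would handle (i) $\Rightarrow$ (ii). Since $R$ is Artinian, it is automatically Cohen--Macaulay of dimension zero, so being Gorenstein is the same as being quasi-Gorenstein (the canonical module $K_R$ is isomorphic to $R$). Hence \cite[Proposition 3.6]{MA14} delivers the $\emph{\F}$PI property directly.

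Second, (ii) $\Rightarrow$ (iii) is already packaged in Proposition \ref{propFPIFPR}: under $\emph{\F}$-finiteness, $\emph{\F}$PI implies that $\widetilde{\emph{\F}}(M)$ is reflexive for every reflexive $R$-module $M$, which is exactly the definition of $\widetilde{\emph{\F}}$PR.

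Third, for (iii) $\Rightarrow$ (i) I would appeal to Lemma \ref{propFPR(S)}, which guarantees that every $\widetilde{\emph{\F}}$PR ring is generically Gorenstein, meaning $R_{\fp}$ is Gorenstein for each $\fp \in \Min R$. The key observation is that an Artinian local ring $(R, \fm)$ has $\dim R = 0$, so $\fm$ is both the unique maximal ideal and the unique minimal prime; consequently $\Min R = \{\fm\}$ and $R = R_{\fm}$ is itself Gorenstein.

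There is no real obstacle here, since all three implications reduce to results already proved earlier in the paper; the only conceptual point is the dimensional collapse in (iii) $\Rightarrow$ (i), where ``Gorenstein at every minimal prime'' is all of $R$ because $\Spec R$ consists of the single point $\fm$.
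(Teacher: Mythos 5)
Your proof is correct and takes a genuinely different organizational route from the paper. The paper establishes $(\mathrm{i})\Leftrightarrow(\mathrm{ii})$ and $(\mathrm{i})\Leftrightarrow(\mathrm{iii})$ as two independent biconditionals: for $(\mathrm{ii})\Rightarrow(\mathrm{i})$ it invokes Proposition \ref{m1} (every $\F$PI ring is generically Gorenstein), with the dimensional collapse $\Min R=\{\fm\}$ left implicit, and for $(\mathrm{i})\Leftrightarrow(\mathrm{iii})$ it cites \cite[Exercise 3.2.15]{BH98} together with Lemma \ref{propFPR(S)}. You instead close the cycle $(\mathrm{i})\Rightarrow(\mathrm{ii})\Rightarrow(\mathrm{iii})\Rightarrow(\mathrm{i})$: the middle arrow $(\mathrm{ii})\Rightarrow(\mathrm{iii})$ is read off directly from Proposition \ref{propFPIFPR}, which lets you dispense with the external Bruns--Herzog exercise and obtain $(\mathrm{i})\Rightarrow(\mathrm{iii})$ for free by composition. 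Both arguments share the same content for $(\mathrm{iii})\Rightarrow(\mathrm{i})$, namely that Lemma \ref{propFPR(S)} gives generic Gorensteinness and in an Artinian local ring $\fm$ is the unique minimal prime, so $R=R_\fm$ is Gorenstein; you spell out this collapse explicitly, which is worth doing since the paper's terse phrasing relies on the reader to notice it. Your $(\mathrm{i})\Rightarrow(\mathrm{ii})$ via quasi-Gorensteinness and \cite[Proposition 3.6]{MA14} substantiates what the paper simply labels ``clear''. In short, the cyclic structure is a self-contained variant that leans on Proposition \ref{propFPIFPR} where the paper leans on an external exercise; both are valid, and yours arguably has cleaner bookkeeping within the paper's own toolkit.
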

\begin{proof}
(i)$\Rightarrow$(ii) is clear. 
(ii)$\Rightarrow$(i) is proved in Proposition \ref{m1}. (i)$\Leftrightarrow$(iii) is clear by \cite[Exercise 3.2.15]{BH98} and Lemma \ref{propFPR(S)}.
\end{proof}

The property of $\widetilde{\F}$PR is not well-behaved upon taking a quotient by a nonzerodivisor.
%not well-behaved by non-zero divisors.
Assume that $R$ is a one-dimensional non-Gorenstein local   ring satisfying $\widetilde{\F}$PR  (e.g. see \cite[after Proposition 4.2]{MA14}). For a non-zero divisor $x$, if $R/xR$ is $\widetilde{\F}$PR, then $R$ is Gorenstein by Corollary \ref{GFtildF} which is not true. 

Assume that  $\depth_R(M)=0$. In the following we examine depths of  $\F(M)$ and ${^{f^e}\hspace{-0.5mm}M}$. 
First note that one has $\depth_R ({^{f^e}\hspace{-0.5mm}M}) = \depth_R(M) = 0 $ for all $R$-modules $M$ but there exists a local ring $R$ and an $R$-module $M$ such that $\depth_R( M) = 0<\depth_R (\F (M))$ (see \cite[Remark 2.1.7 ]{Mil03}).  In a private conversation with Eghbali, Dao has proposed a more
precise observation.
\begin{lem}\label{lemDao}
Let $(R, \fm, k)$ be an $\F$-finite local ring and $M$ a finitely generated $R$-module such that $\emph\depth_R(M) = 0$. Then $k$ is a direct summand of ${^{f^e}\hspace{-0.5mm}M}$, for $e\gg 0$.
\end{lem}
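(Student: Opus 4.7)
My plan is to exhibit an element $y \in M$ annihilated by $\fm$ whose image in ${}^{f^e}\hspace{-0.5mm}M$ generates a direct summand isomorphic to $k$. Since $\depth_R(M) = 0$, we have $\fm \in \Ass M$, so I fix $y \in M \setminus \{0\}$ with $\Ann_R y = \fm$. Because $\fm^{[p^e]} \subseteq \fm$, the same $y$ still satisfies $\fm \ast y = \fm^{[p^e]} y = 0$, so $y$ lies in the socle of ${}^{f^e}\hspace{-0.5mm}M$ as well.

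The first step is to define an $R$-linear injection $\iota \colon k \to {}^{f^e}\hspace{-0.5mm}M$ by $\iota(\bar r) := r^{p^e} y$ for any lift $r \in R$ of $\bar r \in k$. The Frobenius identity $(a+b)^{p^e} = a^{p^e} + b^{p^e}$ valid in characteristic $p$, together with $\fm y = 0$, makes $\iota$ well defined (two lifts of $\bar r$ differ by an element of $\fm$, whose $p^e$-th power annihilates $y$) and additive, and yields compatibility with the twisted $R$-action:
\[
r \ast \iota(\bar s) = r^{p^e} s^{p^e} y = (rs)^{p^e} y = \iota(\overline{rs}).
\]
Injectivity of $\iota$ follows from $\Ann_R y = \fm$ together with injectivity of Frobenius on the field $k$.

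The second step is to produce a retraction $\psi \in \Hom_R({}^{f^e}\hspace{-0.5mm}M, k)$ with $\psi(y) = 1$. Every such $\psi$ factors through the $k$-vector space
\[
V \;:=\; {}^{f^e}\hspace{-0.5mm}M \,\big/\, \fm \ast {}^{f^e}\hspace{-0.5mm}M \;\cong\; M/\fm^{[p^e]} M,
\]
so the only requirement is that the class $\bar y \in V$ be nonzero. Since $\fm^{[p^e]} \subseteq \fm^{p^e}$ and Krull's intersection theorem gives $\bigcap_n \fm^n M = 0$ for the finitely generated module $M$, we have $y \notin \fm^{[p^e]} M$ for every $e \gg 0$. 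Extending $\bar y$ to a $k$-basis of $V$ and sending $\bar y \mapsto 1$ (the rest to $0$) produces $\psi$; a direct computation gives $\psi \circ \iota = \mathrm{id}_k$, splitting $\iota$ and realizing $k$ as a direct summand of ${}^{f^e}\hspace{-0.5mm}M$.

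The only delicate point I anticipate is keeping the two $R$-structures on $M$ straight when verifying $R$-linearity of $\iota$ and of the retraction with respect to the $\ast$-action; once this bookkeeping is carried out carefully the argument is essentially formal. In particular, the proof does not appear to use $\F$-finiteness of $R$ at any stage, although that hypothesis is natural in context and ensures that ${}^{f^e}\hspace{-0.5mm}M$ itself remains finitely generated.
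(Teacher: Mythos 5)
Your proof is correct, and it takes a genuinely more direct route than the paper's. The paper's argument applies the functor ${}^{f^e}\hspace{-0.5mm}(-)$ to the short exact sequence $0 \to \Gamma_{\fm}(M) \to M \to M/\Gamma_{\fm}(M) \to 0$, uses Remark \ref{rem2.1} to see that ${}^{f^e}\hspace{-0.5mm}\Gamma_{\fm}(M)$ becomes a finite direct sum of copies of $k$ for $e\gg 0$, and then compares the minimal numbers of generators $\mu_R({}^{f^e}\hspace{-0.5mm}M) > \mu_R({}^{f^e}\hspace{-0.5mm}(M/\Gamma_{\fm}(M)))$ to conclude that one copy of $k$ splits off. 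The length and generator counts there rest on $\F$-finiteness (via Remark \ref{rem2.1}). You instead isolate the essential mechanism directly: take a single socle element $y$ with $\Ann_R y = \fm$, observe via $\fm^{[p^e]}\subseteq\fm^{p^e}$ and Krull's intersection theorem that $y\notin\fm^{[p^e]}M$ for $e\gg 0$, and build the explicit splitting $k \xrightarrow{\ \iota\ } {}^{f^e}\hspace{-0.5mm}M \xrightarrow{\ \psi\ } k$ by hand, checking $R$-linearity against the twisted $\ast$-action. This is cleaner, avoids local cohomology entirely, and — as you correctly note — does not use the $\F$-finiteness hypothesis at all, so your version actually proves a slightly stronger statement. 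One small bookkeeping remark worth making explicit in a final write-up: the factorization of $\psi$ through $V = {}^{f^e}\hspace{-0.5mm}M / (\fm\ast {}^{f^e}\hspace{-0.5mm}M)$ uses that the target $k$ is killed by $\fm$, and the identification $V\cong M/\fm^{[p^e]}M$ (additively) is what lets you translate ``$\bar y\neq 0$ in $V$'' into ``$y\notin\fm^{[p^e]}M$''; both points are implicit in your sketch and are easy to verify, but they are exactly the places where the two $R$-module structures on $M$ must be kept straight, as you flagged.
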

\begin{proof}
As $\depth_R(M) = 0 $, $\Gamma_{\fm} (M) \nsubseteq \fm ^{[q']} M $ for some $q' = p^{e'}$. Also $ \ell_R (\Gamma_{\fm} (M)) < \infty$ implies that ${^{f^{e''}}\hspace{-0.5mm}\Gamma_{\fm} (M)}$ is a $k$-vector space for $e'' \gg 0$. Set $e := e' + e''$ and $N := M/\Gamma _{\fm} (M)$. Applying the functor ${^{f^e}}\hspace{-0.5mm}(-)$ to $ 0 \rightarrow \Gamma_{\fm} (M) \rightarrow M \rightarrow N \rightarrow 0$, implies the exact sequence
\begin{equation}\tag{\ref{lemDao}.1}
0 \rightarrow \overset{\text{finite}}\oplus k \rightarrow {^{f^e}\hspace{-0.5mm}M} \rightarrow {^{f^e}\hspace{-0.5mm}N} \rightarrow 0.
\end{equation}
As $\fm ^{[q]} M\subsetneq\fm ^{[q]} M+\Gamma_{\fm} (M)  \subseteq  M $, $ \ell_R (M/{\fm^{[q]} {M}}) > \ell_R (N/{\fm^{[q]} {N}})$. Therefore, by Remark \ref{rem2.1}. one has the inequality $\mu_R ({^{f^e}\hspace{-0.5mm}M}) > \mu_R ({^{f^e}\hspace{-0.5mm}N})$ between the minimum numbers of generators. Hence from (\ref{lemDao}.1) there is a split injection $k\to {^{f^e}\hspace{-0.5mm}M}$.	
\end{proof}
As an application, we are able to recover the following result about the Gorenstein dimension, $\gd_R(-)$.
\begin{prop} \label{G-dim}\cite[Theorem 6.2]{TY04}
Let $(R, \fm , k)$ be an $\F$-finite local ring. The following conditions are equivalent.
\begin{enumerate}[\rm(i)]
\item R is Gorenstein.
\item $\emph\gd_R ({^{f^e}\hspace{-0.5mm}R})< \infty$ for every $e > 0$. 
\end{enumerate}
\end{prop}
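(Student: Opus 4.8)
The plan is to prove the equivalence of (i) and (ii) in Proposition~\ref{G-dim}, since the direction (i)$\Rightarrow$(ii) is immediate: if $R$ is Gorenstein then ${^{f^e}\hspace{-0.5mm}R}$ is a finitely generated $R$-module (as $R$ is $\F$-finite) over a Gorenstein local ring, so its Gorenstein dimension is finite by the Auslander--Bridger formula, in fact equal to $\depth R - \depth_R({^{f^e}\hspace{-0.5mm}R})$. So the whole content is in (ii)$\Rightarrow$(i), and the strategy is to pick a single $e$ large enough that Lemma~\ref{lemDao} applies and derive a contradiction with $\gd_R({^{f^e}\hspace{-0.5mm}R}) < \infty$ unless $R$ is Gorenstein.

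First I would reduce to the case $\depth R = 0$. If $\depth R = t > 0$, choose a maximal $R$-regular sequence $\underline{x} = x_1,\dots,x_t$. Since ${^{f^e}\hspace{-0.5mm}R}$ has $\depth_R({^{f^e}\hspace{-0.5mm}R}) = \depth R = t$ and $\underline{x}$ is also a regular sequence on it (being $\F$-finite, ${^{f^e}\hspace{-0.5mm}R}$ is a faithful module whose depth equals that of $R$; more carefully $x_i$ acts on ${^{f^e}\hspace{-0.5mm}R}$ via the \emph{right} structure, i.e.\ the identity, so $R/(\underline{x}) \otimes_R {^{f^e}\hspace{-0.5mm}R} \cong {^{f^e}\hspace{-0.5mm}(R/(\underline{x})^{[q]})}$ and the relevant Tor's vanish), passing to $\bar R := R/(\underline{x})$ we get $\gd_{\bar R}(\bar R \otimes_R {^{f^e}\hspace{-0.5mm}R}) < \infty$ by the standard base-change property of Gorenstein dimension along a quotient by a regular sequence. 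One then identifies $\bar R \otimes_R {^{f^e}\hspace{-0.5mm}R}$ with a module of the form ${^{f^e}\hspace{-0.5mm}(\bar R)}$ up to the twist by $\fm$-primary powers, so it suffices to treat the Artinian case; and $R$ is Gorenstein iff $\bar R$ is.

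Now assume $\depth R = 0$, so Lemma~\ref{lemDao} gives, for $e \gg 0$, a direct summand $k \mid {^{f^e}\hspace{-0.5mm}R}$. Fix such an $e$. From $\gd_R({^{f^e}\hspace{-0.5mm}R}) < \infty$ and the fact that Gorenstein dimension passes to direct summands, we get $\gd_R(k) < \infty$. But over any local ring, $\gd_R(k) < \infty$ forces $R$ to be Gorenstein (this is the classical characterization of Gorenstein rings via $\gd$, due to Auslander--Bridger: $\gd_R(k) < \infty \iff \gd_R(M) < \infty$ for all $M \iff R$ Gorenstein). This completes (ii)$\Rightarrow$(i).

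The main obstacle I anticipate is the bookkeeping in the reduction to depth zero: one must check that cutting down by a regular sequence genuinely interacts well with the functor ${^{f^e}}\hspace{-0.5mm}(-)$ and with Gorenstein dimension simultaneously --- in particular that $x_1,\dots,x_t$ remains a regular sequence on ${^{f^e}\hspace{-0.5mm}R}$ (true because the right action is the identity) and that the twist by $\fm^{[q]}$-primary ideals appearing in $R/(\underline{x}) \otimes_R {^{f^e}\hspace{-0.5mm}R} \cong {^{f^e}\hspace{-0.5mm}(R/(\underline{x})^{[q]})}$ does not affect the finiteness of $\gd$ nor the Gorenstein property. Once the Artinian reduction is in place, the argument is short: Lemma~\ref{lemDao} plus the summand-stability of $\gd$ plus the classical $\gd_R(k)<\infty \Rightarrow R$ Gorenstein. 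I would cite \cite[Theorem 6.2]{TY04} for the original statement but present the Lemma~\ref{lemDao}-based argument as the promised recovery.
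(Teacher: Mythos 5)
Your high-level strategy is the same as the paper's: in the depth-zero case use Lemma~\ref{lemDao} to exhibit $k$ as a direct summand of a Frobenius twist of known finite G-dimension, then invoke summand-stability of G-dimension and the Auslander--Bridger characterization of Gorenstein rings. The direction (i)$\Rightarrow$(ii) is also handled the same way. The difference is in the reduction to depth zero, and your version of it has a genuine gap.

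You propose to pass to $\bar R = R/(\underline{x})$ and use base change for G-dimension along the regular sequence $\underline{x}$. But when one computes $\gd_R$ of ${^{f^e}\hspace{-0.5mm}R}$ one is using the \emph{left} $R$-module structure, under which $x_i$ acts by multiplication by $x_i^q$. Hence the module that actually appears after cutting down is $ {^{f^e}\hspace{-0.5mm}R}/\underline{x}\ast{^{f^e}\hspace{-0.5mm}R}\cong {^{f^e}\hspace{-0.5mm}\bigl(R/(\underline{x})^{[q]}\bigr)}$ as a $\bar R$-module, exactly as you note. This is \emph{not} the module ${^{f^e}_{\bar R}\hspace{-0.5mm}(\bar R)}$ that would let you conclude that $\bar R$ satisfies hypothesis (ii): the two have different lengths ($\ell_R(R/\underline{x}^{[q]})$ vs.\ $\ell_R(R/\underline{x})$) and are genuinely different $\bar R$-modules. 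Your parenthetical that the ``twist by $\fm$-primary powers'' is harmless is not a bookkeeping footnote --- it is precisely the missing argument. There is no obvious way to deduce $\gd_{\bar R}\bigl({^{f^{e}}_{\bar R}(\bar R)}\bigr)<\infty$ from $\gd_{\bar R}\bigl({^{f^e}(R/\underline{x}^{[q]})}\bigr)<\infty$, so ``it suffices to treat the Artinian case'' does not follow from what you have established. (Trying to apply Lemma~\ref{lemDao} over $R$ to $M=R/\underline{x}^{[q]}$ instead runs into an $e$ vs.\ $e'$ mismatch: the lemma supplies $k$ inside ${^{f^{e'}}\hspace{-0.5mm}M}$ only for $e'\gg 0$ depending on $M$, whereas your finiteness statement is for the one $e$ tied to $q=p^e$.)

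The paper avoids changing the base ring entirely. Instead it applies the \emph{exact} functor ${^{f^e}}(-)$ to the short exact sequences $0\to R/(x_1,\dots,x_{i-1})\xrightarrow{x_i} R/(x_1,\dots,x_{i-1})\to R/(x_1,\dots,x_i)\to 0$, producing short exact sequences of $R$-modules ending with ${^{f^e}(R/(x_1,\dots,x_t))}$. By the closure of finite $\gd_R$ under such short exact sequences (cited as \cite[Theorem 18]{VM00}), one gets $\gd_R\bigl({^{f^e}(R/\underline{x})}\bigr)<\infty$ for every $e$. Since $\depth_R(R/\underline{x})=0$, Lemma~\ref{lemDao}, applied \emph{over $R$} to the fixed module $M=R/\underline{x}$, gives $k$ as a direct summand of ${^{f^e}(R/\underline{x})}$ for $e\gg 0$, and then $\gd_R(k)<\infty$ and $R$ is Gorenstein. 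If you want to keep your plan, replace the base change by this over-$R$ argument: apply ${^{f^e}}(-)$ to the untwisted multiplication-by-$x_i$ sequences rather than cutting ${^{f^e}\hspace{-0.5mm}R}$ by the left action of $\underline{x}$.
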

\begin{proof}
(i)$\Rightarrow$(ii). Obvious by \cite[Theorem 4.20]{Aus69}.\\	
(ii)$\Rightarrow$(i). Set $t:= \depth R$. If $t = 0$, then by Lemma \ref{lemDao}, $k$ is a direct summand of ${^{f^e}\hspace{-0.5mm}R}$, for $e \gg 0$. Therefore, by \cite[Lemma (1.1.10) (c)]{CH00}, 
$\gd_R (k)< \infty $ and so $R$ is Gorenstein by \cite[Theorem 17]{VM00}. Suppose $t>0$ and that $\underline{x}=x_1,\cdots, x_t$ is a maximal $R$-regular sequence in $\fm$. For any $e$, we have the exact sequences $0 \rightarrow {^{f^e}\hspace{-0.5mm}(R/(x_1, \cdots, x_{i-1}))} \overset{x_i}\rightarrow {^{f^e}\hspace{-0.5mm}(R/(x_1, \cdots, x_{i-1}))} \rightarrow {^{f^e}\hspace{-0.5mm}(R/(x_1, \cdots, x_i))} \rightarrow 0,\ 1\leq i\leq t,$ from which we eventually have $\gd_R({^{f^e}\hspace{-0.5mm}(R/(x_1, \cdots, x_t))}) < \infty$ \cite[Theorem 18]{VM00}. But $\depth(R/(x_1, \cdots, x_t))= 0$, hence $k$ is a direct summand of $ {^{f^e}\hspace{-0.5mm}(R/(x_1, \cdots, x_t))}$ for $e \gg 0$. This implies that $\gd_R (k) < \infty$ and so $R$ is Gorenstein. 
\end{proof}
 
In Theorem \cite[Theorem 1.7]{PS73}, Peskine-Szpiro have shown that, for  a not necessarily $\F$-finite ring $R$, if $M$ is an $R$-module with $\pd_R(M)<\infty$ then  $\pd_R(\F(M))<\infty$. It is known that if $M$ has a finite injective dimension then $\id_R ( \widetilde{\F}(M)) < \infty$ (see \cite[Satz 5.2]{HE74}). The converse may not be true and an Artinian non-Gorenstein local ring with $\fm ^{[p]} = 0$ would be an example. But, in a one-dimensional local ring, the converse could be true under mild condition (see Theorem\ref{FtildeF} (a)). 

In the following, we investigate some results about rings for which $\widetilde{\F}(M)$ is injective for modules $M$.

\begin{prop} \label{lem2.10}
Let $(R, \fm)$ be a zero dimensional $\F$-finite local ring and $e>0$. Then $\widetilde{\emph\F}^e(M)$ is injective for any $R$-module $M$ if and only if $\fm^{[p^e]}=0$.
\end{prop}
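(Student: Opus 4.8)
The plan is to move the whole statement, via Matlis duality and Herzog's Proposition~\ref{DFtildF}(i), into a question about the Peskine--Szpiro functor $\F^e$ applied to modules of finite length, where the hypothesis is transparent. Throughout write $q=p^e$, $k=R/\fm$ and $\D_R(-)=\Hom_R(-,\E_R(k))$; since $R$ is Artinian it is complete, so $\D_R$ is a perfect duality on finite length modules. The first facts to record are: $\fm^{[q]}=0$ holds if and only if $a^{q}=0$ for every $a\in\fm$ (because $\fm^{[q]}$ is generated by $q$-th powers of elements of $\fm$ and the $q$-th power map is additive in characteristic $p$); $\D_R(k)\cong k$; and $\F^e(k)=\F^e(R/\fm)\cong R/\fm^{[q]}$ (Notation~\ref{l1}).

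For the implication $\Leftarrow$, assume $\fm^{[q]}=0$. Then $r^{q}=0$ for all $r\in\fm$, so the left $R$-action $r\ast s=r^{q}s$ on ${^{f^e}\hspace{-0.5mm}R}$ annihilates $\fm$; as $R$ is $\F$-finite, ${^{f^e}\hspace{-0.5mm}R}$ is therefore a finite-dimensional $k$-vector space as a left $R$-module. Consequently, for an arbitrary $R$-module $M$ every homomorphism ${^{f^e}\hspace{-0.5mm}R}\to M$ factors through $(0:_M\fm)$, and after choosing a $k$-basis of ${^{f^e}\hspace{-0.5mm}R}$ one identifies $\widetilde{\F}^e(M)=\Hom_R({^{f^e}\hspace{-0.5mm}R},M)$ with a direct sum of copies of $\widetilde{\F}^e(k)$, one copy for each element of a $k$-basis of $(0:_M\fm)$ (the $R$-structure, induced by right multiplication on ${^{f^e}\hspace{-0.5mm}R}$, commutes with this identification). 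Since a direct sum of injective modules over a Noetherian ring is injective, it suffices to check that $\widetilde{\F}^e(k)$ is injective. But $\widetilde{\F}^e(k)$ has finite length (both ${^{f^e}\hspace{-0.5mm}R}$ and $k$ are finitely generated over the Artinian ring $R$), and Proposition~\ref{DFtildF}(i), applied to the Artinian module $k$, gives $\D_R(\widetilde{\F}^e(k))\cong\F^e(\D_R(k))\cong\F^e(k)\cong R/\fm^{[q]}=R$; dualizing back yields $\widetilde{\F}^e(k)\cong\D_R(R)=\E_R(k)$, which is injective.

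For the implication $\Rightarrow$, apply the hypothesis with $M=k$. Then $\widetilde{\F}^e(k)$ is a finite length injective $R$-module, hence, by the structure of injectives over an Artinian local ring, $\widetilde{\F}^e(k)\cong\E_R(k)^{\,n}$ for some $n\geq 0$, so $\D_R(\widetilde{\F}^e(k))\cong R^{\,n}$. On the other hand Proposition~\ref{DFtildF}(i) gives $\D_R(\widetilde{\F}^e(k))\cong\F^e(\D_R(k))\cong\F^e(k)\cong R/\fm^{[q]}$. Hence $R/\fm^{[q]}\cong R^{\,n}$; as $R/\fm^{[q]}$ is a nonzero cyclic module this forces $n=1$ and $R/\fm^{[q]}\cong R$, and comparing lengths (equivalently, noting that a surjective endomorphism of the Noetherian module $R$ is an isomorphism) gives $\fm^{[q]}=0$.

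The only genuinely non-formal step is the structural observation used in $\Leftarrow$: the vanishing $\fm^{[q]}=0$ is exactly what makes the left $R$-module ${^{f^e}\hspace{-0.5mm}R}$ semisimple, and this is what collapses $\widetilde{\F}^e$ of an arbitrary module into a direct sum of copies of $\widetilde{\F}^e(k)$; the identification $\widetilde{\F}^e(k)\cong\E_R(k)$ is precisely where Herzog's duality enters. Everything else is routine bookkeeping with Matlis duality and the structure theory of injectives over Artinian local rings.
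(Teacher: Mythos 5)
Your proof is correct. A quick check of the one slightly awkward step: with $\fm^{[q]}=0$ the left $R$-module ${^{f^e}\hspace{-0.5mm}R}$ is killed by $\fm$, so it is a finite-dimensional $k$-vector space; every $\theta\in\Hom_R({^{f^e}\hspace{-0.5mm}R},M)$ has image in $(0:_M\fm)$ because $r\theta(s)=\theta(r^{q}s)=0$ for $r\in\fm$; and, since ${^{f^e}\hspace{-0.5mm}R}$ is finitely generated, $\Hom_R({^{f^e}\hspace{-0.5mm}R},-)$ commutes with arbitrary direct sums, so writing $(0:_M\fm)\cong k^{(I)}$ gives $\widetilde{\F}^e(M)\cong\bigoplus_I\widetilde{\F}^e(k)$, and this isomorphism is $R$-linear for the $\widetilde{\F}^e$-structure because the $R$-action is on the first variable. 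Everything else is as you say.

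Compared with the paper, the forward direction ($\Rightarrow$) is essentially identical: both specialize to $M=k$ (the paper writes it as $\D_R(R/\fm)$, which is $k$), use Herzog's duality to turn $\widetilde{\F}^e(k)$ into $\F^e(k)\cong R/\fm^{[q]}$, and compare against the Matlis dual of a power of $\E_R(k)$. The reverse direction differs genuinely. The paper's argument takes a minimal free presentation $R^m\xrightarrow{(a_{ij})}R^n\to\D_R(M)\to 0$ and kills the matrix $(a_{ij}^{p^e})$ by the hypothesis; this presupposes that $\D_R(M)$ is finitely generated, i.e.\ that $M$ has finite length, and does not literally apply to an arbitrary $M$ without an extra reduction (e.g.\ writing $M$ as a filtered colimit of finite-length submodules and using that $\Hom_R({^{f^e}\hspace{-0.5mm}R},-)$ commutes with filtered colimits and that filtered colimits of injectives are injective over a Noetherian ring). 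Your argument sidesteps that issue: the structural fact that $\fm^{[q]}=0$ makes ${^{f^e}\hspace{-0.5mm}R}$ semisimple on the left lets you decompose $\widetilde{\F}^e(M)$ for arbitrary $M$ in one stroke into a direct sum of copies of $\widetilde{\F}^e(k)$, after which Herzog duality is invoked only for the single finite-length module $k$. This is both cleaner and strictly more general in its handling of non-finitely-generated $M$; the paper's route has the mild advantage of being more explicitly computational via presentations.
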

	
\begin{proof}
For the ``only if" part, take $M = \D_R(R/{\fm})$. By Proposition \ref{DFtildF}, $\widetilde{\F}^e(\D_R(R/{\fm})) \cong \D_R(\F^e(R/{\fm}))$, hence $\F^e(R/{\fm}) \cong R^n$, for some $n>0$. Therefor $R/{\fm}^{[p^e]} \cong R$ and so $\fm^{[p^e]}= 0$. 		
For the converse, take a minimal presentation $R^m \xrightarrow{(a_{ij})} R^n \rightarrow \D_R(M) \rightarrow 0$ of $\D_R(M)$, where $a_{ij} \in \fm$. Applying $\F^e$, we have $R^m \xrightarrow{(a_{ij}^{[p^e]})} R^n \rightarrow \F^e(\D_R(M)) \rightarrow 0$. But $(a_{ij}^{[p^e]}) = 0$, hence $\F^e(\D_R(M)) \cong R^n$. Therefore $\widetilde{\F}^e(M)$ is injective.
\end{proof}

The above result may raise the natural question. For which rings $R$, is $\widetilde{\F} (R_{\fq})$ injective as an $R_{\fq}$-module for any minimal prime ideal $\fq$ of $R$? (e.g. $R$ is generically Gorenstein.) In order to answer the question, we use the notion of the delta-invariant.
	
Assume that $R$ is a Cohen-Macaulay local ring which admits a canonical module and that $M$ is a finitely generated $R$-module. Therefore a {\it Cohen-Macaulay approximation} of $M$ exists which is a short exact sequence 
$
0\longrightarrow Y \longrightarrow X \overset{\varphi}{\longrightarrow} M \longrightarrow 0 
$
such that $X$ is a maximal Cohen-Macaulay $R$-module and $Y$ is a finitely generated $R$-module with finite injective dimension. We say that the Cohen-Macaulay approximation is {\it minimal} if each endomorphism $ \psi $ of $ X $ with $ \varphi\circ \psi=\varphi $ is an automorphism of $ X $. Note that a minimal Cohen-Macaulay approximation of $ M $ exists and is unique up to isomorphism (see \cite[Theorem 11.16]{LW12} and \cite[Corollary 2.4]{HS97}). If $ 0\longrightarrow Y \longrightarrow X \overset{\varphi}{\longrightarrow} M \longrightarrow 0 
$ is a minimal Cohen-Macaulay approximation of $ M $, then  the maximum rank of all free direct summands of $ X $ is called the delta-invariant of $M$ and denoted by $ \delta_R(M) $ (see \cite[Exercise 11.47]{LW12} and \cite[Proposition 1.3]{D92}).

The following result gives us an answer to the question.

\begin{prop}\label{g1}
Let $(R, \fm, k)$ be a Cohen-Macaulay $\F$-finite local ring and $\fp$ be a minimal prime ideal of $R$. Then $\widetilde{\F} (R_{\fp})$ is injective as $R_{\fp}$-module if and only if there exists an $R$-module $M$ such that $\delta_R(M)\neq0$ and $\widetilde{\F}(M_{\fp})$ is injective. 			 
\end{prop}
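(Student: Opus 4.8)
The plan is to prove the two implications separately, the backward one carrying all the content. For the ``only if'' direction I would simply take $M = R$: its minimal Cohen--Macaulay approximation is $0 \to 0 \to R \xrightarrow{1} R \to 0$, so $\delta_R(R) = 1 \neq 0$, while $M_\fp = R_\fp$ makes the requirement ``$\widetilde{\F}(M_\fp)$ is injective'' literally the hypothesis.

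For the ``if'' direction, assume $M$ is a finitely generated $R$-module with $\delta_R(M) \neq 0$ and $\widetilde{\F}(M_\fp)$ injective, and let us deduce that $\widetilde{\F}(R_\fp)$ is injective. Since $R$ is $\F$-finite it admits a canonical module, so the minimal Cohen--Macaulay approximation $0 \to Y \to X \xrightarrow{\varphi} M \to 0$ exists, and $\delta_R(M) \neq 0$ forces a decomposition $X \cong R \oplus X'$ with $X'$ finitely generated. Localizing at the minimal prime $\fp$ --- so that $R_\fp$ is Artinian, in particular complete --- gives an exact sequence $0 \to Y_\fp \to R_\fp \oplus X'_\fp \to M_\fp \to 0$ with $\id_{R_\fp} Y_\fp \leq \id_R Y < \infty$. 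By the Bass equality $\id_{R_\fp} Y_\fp = \depth R_\fp = 0$ whenever $Y_\fp \neq 0$, so the finitely generated module $Y_\fp$ is injective; write $Y_\fp \cong \overset{t}{\oplus} E$ with $E := \E_{R_\fp}(k(\fp))$ and $t \geq 0$ (the case $Y_\fp = 0$ being $t = 0$).

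Next I would apply $\widetilde{\F} = \widetilde{\F}_{R_\fp} = \Hom_{R_\fp}({^f\hspace{-0.5mm}(R_\fp)}, -)$ to this sequence. As $Y_\fp$ is injective, the $\Ext^1$ term in the long exact sequence vanishes and one obtains a short exact sequence $0 \to \widetilde{\F}(Y_\fp) \to \widetilde{\F}(R_\fp) \oplus \widetilde{\F}(X'_\fp) \to \widetilde{\F}(M_\fp) \to 0$. Since $R_\fp$ is $\F$-finite and complete with $\F_{R_\fp}(R_\fp) \cong R_\fp$, Herzog's duality (Proposition \ref{DFtildF}(ii)) yields $\widetilde{\F}(E) = \widetilde{\F}(\D_{R_\fp}(R_\fp)) \cong \D_{R_\fp}(\F_{R_\fp}(R_\fp)) \cong \D_{R_\fp}(R_\fp) = E$, hence $\widetilde{\F}(Y_\fp) \cong \overset{t}{\oplus} E$ is injective. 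Therefore the sequence splits, giving $\widetilde{\F}(R_\fp) \oplus \widetilde{\F}(X'_\fp) \cong (\overset{t}{\oplus} E) \oplus \widetilde{\F}(M_\fp)$; the right-hand side is injective because $\widetilde{\F}(M_\fp)$ is injective by hypothesis, and consequently its direct summand $\widetilde{\F}(R_\fp)$ is injective, as required.

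The conceptual heart of the argument is the reduction to the minimal prime: over the Artinian complete ring $R_\fp$ a finitely generated module of finite injective dimension is automatically injective by the Bass equality, and $\widetilde{\F}_{R_\fp}$ fixes $\E_{R_\fp}(k(\fp))$ by Herzog's formula, after which the conclusion is a routine splitting of a short exact sequence. I do not anticipate a serious obstacle; the two points requiring care are the identification $\widetilde{\F}_{R_\fp}(\E_{R_\fp}(k(\fp))) \cong \E_{R_\fp}(k(\fp))$ and the observation that $\delta_R(M) \neq 0$ is exactly what places a free direct summand inside $X$.
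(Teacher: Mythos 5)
Your proof is correct and follows essentially the same route as the paper's: take $M=R$ for the trivial direction, and for the converse use the free summand of the minimal Cohen--Macaulay approximation furnished by $\delta_R(M)\neq 0$, pass to the localized short exact sequence, show $\widetilde{\F}(Y_\fp)$ is a finite sum of copies of $\E_{R_\fp}(k(\fp))$ so the sequence splits, and read off $\widetilde{\F}(R_\fp)$ as a direct summand of an injective. The only cosmetic differences are that the paper applies $\widetilde{\F}$ globally (using $\Ext^1_R({^f\hspace{-0.5mm}R},Y)=0$ from $\id_R Y<\infty$) before localizing, whereas you localize first, and the paper cites Herzog's Lemma 4.1 for $\widetilde{\F}(\E)\cong\E$ rather than deducing it from the duality formula---neither changes the substance.
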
 	
	
\begin{proof}
Note that $R$ is $\F$-finite so that it is a homomorphic image of a Gorenstein local ring and  admits a canonical module  $\omega_R$.  The ``only if " part is obvious by taking $M=R$.
		
For the converse, as $\delta_R(M)\neq0$, there exists an exact sequence 
$0\rightarrow Y \rightarrow R\oplus X  \rightarrow 	M \rightarrow 0$ such that $X$ is a maximal Cohen-Macaulay $R$-module  and that $\id_R(Y)< \infty $. Applying $ \widetilde{\F}$ gives the following exact sequence 
\begin{equation}\tag{\ref{g1}.1}
0\rightarrow \widetilde{\F} (Y) \rightarrow \widetilde{\F}(R)\oplus \widetilde{\F}(X)  \rightarrow 	\widetilde{\F}(M) \rightarrow 0
\end{equation} 
since $\id_R (Y) < \infty $, $ \Ext^1 _R ({^f\hspace{-0.5mm}R}, Y) = 0$ (see \cite[Satz 5.2]{HE74}). Localizing (\ref{g1}.1) at $\fp$ gives the exact sequence 
\begin{equation}\tag{\ref{g1}.2}
0\rightarrow \widetilde{\F} (Y_\fp) \rightarrow \widetilde{\F}(R_\fp)\oplus \widetilde{\F}(X_\fp)  \rightarrow 	\widetilde{\F}(M_\fp) \rightarrow 0.
\end{equation}
As $\id_{R_{\fp}}(Y_{\fp}) = 0$, one has $ \widetilde{\F} (Y_{\fp})  \cong \overset{\text{finite}}{\oplus} \widetilde{\F} (\E (k(\fp)) )\cong\overset{\text{finite}}{\oplus}\E (k(\fp)) $ by \cite[Lemma 4.1]{HE74}.   Therefore the exact sequence (\ref{g1}.2) splits and so
$\widetilde{\F}(R_\fp)\oplus \widetilde {\F}(X_\fp) 
\cong \widetilde{\F} (Y_\fp) \oplus 	\widetilde{\F}(M_\fp).$
As the right hand side is an injective $R_{\fp}$-module,  so is $\widetilde{\F} (R_{\fp}) $.
\end{proof}
	 Inspired by Proposition 
\ref{G-dim}, we bring up the following result which gives an explicit answer to the questions whether $\widetilde{\F}$PR implies Gorensteinness or ${\F}$PI in the
one dimensional case.

\begin{thm} \label{FtildeF}
Let $(R, \fm)$ be a one-dimensional local ring. The following statements hold true.
\begin{enumerate} [\rm(a)]
\item $R$ is Gorenstein if and only if 
$R$ is $\widetilde{\emph\F}$PR and $\emph\id_R \widetilde{\emph\F}(R) < \infty$.
\item $R$ is $\emph\F$PI if and only if $R$ is Cohen-Macaulay and  $\widetilde{\emph\F}$PR, and $\widetilde{\emph\F}(R)$ has a non-trivial free direct summand.
\end{enumerate}
\end{thm}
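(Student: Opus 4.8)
My plan is to handle the two statements separately, in each case proving both implications, and to lean heavily on the earlier machinery: Herzog's duality Proposition \ref{DFtildF}, Marley's criterion Proposition \ref{aa}, Lemma \ref{lemDao}, and the Gorenstein-dimension argument used in Proposition \ref{G-dim}. Since the hypotheses are local and one-dimensional, I may pass to the completion (using \cite[Proposition 3.4]{MA14} for the $\F$PI statements and the fact that completion preserves $\F$-finiteness, reflexivity, and injective dimension over the relevant modules), so effectively I can assume $R$ is complete and hence a homomorphic image of a Gorenstein ring; in particular $R$ admits a canonical module $K_R$.

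For part (a): the forward direction is easy — if $R$ is Gorenstein, then $\widetilde{\F}(R)\cong K_R \cong R$ has finite injective dimension by \cite[Satz 5.2]{HE74} (or directly, since $\widetilde{\F}$ preserves finite injective dimension and $\id_R R<\infty$), and $\widetilde{\F}$PR holds by Proposition \ref{propFPIFPR} applied to the $\F$PI (indeed Gorenstein) ring $R$. For the converse, suppose $R$ is $\widetilde{\F}$PR with $\id_R\widetilde{\F}(R)<\infty$. By Lemma \ref{propFPR(S)}, $R$ is generically Gorenstein, hence unmixed of dimension one once we check it has no embedded primes; I would argue $\depth R>0$ as follows. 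If $\depth R=0$, Lemma \ref{lemDao} gives $k$ as a direct summand of ${^{f^e}\hspace{-0.5mm}R}$ for $e\gg0$; but $\widetilde{\F}^e(R)=\Hom_R({^{f^e}\hspace{-0.5mm}R},R)$ then contains $\Hom_R(k,R)$ as a direct summand. Since $\depth R=0$, $\Hom_R(k,R)\neq0$, and in fact iterating and using that $\widetilde{\F}^e(R)$ must stay reflexive (a reflexive module over a one-dimensional ring has positive depth at every associated prime of $R$ of dimension one, while $\Hom_R(k,R)$ is a nonzero module of finite length) forces a contradiction — so $R$ is Cohen-Macaulay. Now $\widetilde{\F}(R)\cong\Hom_R({^f\hspace{-0.5mm}R},R)$ has finite injective dimension; since $R$ is Cohen-Macaulay one-dimensional and ${^f\hspace{-0.5mm}R}$ is a maximal Cohen-Macaulay module, Herzog's formula \cite[Korollar 5.9]{HE74} (or the canonical-module computation used in Proposition \ref{Canonical}) identifies $\widetilde{\F}(R)$ with $\Hom_R(K_R^{[p]},K_R)$ up to the appropriate twist. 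Finite injective dimension of a maximal Cohen-Macaulay module over a Cohen-Macaulay local ring forces it to be (a direct sum of copies of) the canonical module, so $\widetilde{\F}(R)\cong K_R^{\,t}$; combined with $\widetilde{\F}$PR and tracking ranks at the minimal prime (where things become free by Remark \ref{rem2.1}, exactly as in Lemma \ref{propFPR(S)}), one gets $\widetilde{\F}(R)\cong K_R$ and then $R\cong K_R$, i.e.\ $R$ is quasi-Gorenstein; being Cohen-Macaulay it is Gorenstein.

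For part (b): the forward direction is immediate — if $R$ is $\F$PI then $R$ is Cohen-Macaulay by \cite[Proposition 3.12]{MA14}, $\widetilde{\F}$PR by Proposition \ref{propFPIFPR}, and $\widetilde{\F}(R)\cong R$ by Proposition \ref{aa}, which certainly has a non-trivial free direct summand. For the converse, assume $R$ is Cohen-Macaulay, $\widetilde{\F}$PR, and $\widetilde{\F}(R)=R'\oplus F$ with $F$ a nonzero free module. By the same canonical-module identification as in part (a), $\widetilde{\F}(R)\cong\Hom_R(K_R^{[p]},K_R)$ (suitably interpreted), which is a maximal Cohen-Macaulay reflexive $R$-module, and $\widetilde{\F}$PR together with Lemma \ref{propFPR(S)} gives that $R$ is generically Gorenstein; at the minimal prime $\fp$ we have $\widetilde{\F}(R)_\fp\cong R_\fp$ (rank one). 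A free direct summand of a rank-one module over a generically Gorenstein one-dimensional Cohen-Macaulay ring, combined with reflexivity, forces $\widetilde{\F}(R)$ itself to be free of rank one, i.e.\ $\widetilde{\F}(R)\cong R$; then Proposition \ref{aa} gives $\F$PI. The main obstacle I anticipate is the rigidity step in both converses — showing that a reflexive (resp.\ maximal Cohen-Macaulay) module whose localization at the minimal prime is free of rank one, and which has a free summand (resp.\ finite injective dimension), is forced to be $R$ itself (resp.\ $K_R$); this is where one-dimensionality is essential and where I would invoke the structure of reflexive modules over $(S_2)$ one-dimensional rings (e.g.\ \cite[Proposition 4.21]{Aus69} as in Proposition \ref{propFPIFPR}) together with the delta-invariant / Cohen-Macaulay approximation circle of ideas from Proposition \ref{g1}.
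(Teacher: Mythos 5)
Your overall plan is sound and your forward directions in both (a) and (b) match the paper's. However, your converse for (a) has a genuine gap, and the final step there is glossed over, while your converse for (b) is a valid route that is actually a bit cleaner than the paper's. Details follow.

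\textbf{Converse of (a): the Cohen--Macaulay step.} You try to show $\depth R>0$ via Lemma~\ref{lemDao}: if $\depth R=0$ then $k$ is a direct summand of ${^{f^e}\hspace{-0.5mm}R}$ for $e\gg 0$, and you claim $\Hom_R(k,R)$ is then a direct summand of $\widetilde{\F}^e(R)$. This does not follow. The splitting ${^{f^e}\hspace{-0.5mm}R}\cong k\oplus N$ produced by Lemma~\ref{lemDao} is a splitting of \emph{left} $R$-modules, but the $R$-module structure on $\widetilde{\F}^e(R)=\Hom_R({^{f^e}\hspace{-0.5mm}R},R)$ is induced by the \emph{right} action $s\cdot r=rs$, which need not preserve the left-module summands; a left $R$-submodule of ${^{f^e}\hspace{-0.5mm}R}$ is closed under multiplication by $r^{p^e}$, not by $r$. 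So $\Hom_R(\iota,R)$ and $\Hom_R(\rho,R)$ are not $R$-linear with respect to the $\widetilde{\F}^e$-structure, and the claimed direct summand of $\widetilde{\F}^e(R)$ does not materialize. The paper sidesteps all of this: once $\widetilde{\F}(R)$ is a finitely generated $R$-module of finite injective dimension, $R$ is Cohen--Macaulay by Bass's theorem. That is the argument you want here.

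\textbf{Converse of (a): the final step.} After getting $\widetilde{\F}(R)$ MCM of finite injective dimension and hence a direct sum of copies of $\omega_R$ (this part of your argument matches the paper, via \cite[3.3.28]{BH98}, and the rank count at minimal primes does give a single copy), you jump from $\widetilde{\F}(R)\cong K_R$ to $R\cong K_R$. That is not a direct implication. What you actually know, from $\widetilde{\F}$PR applied to the reflexive module $R$, is that $\widetilde{\F}(R)\cong\omega_R$ is \emph{reflexive}; in dimension one this forces $R$ to be Gorenstein (this is \cite[(3.2)]{AG85}, and it is precisely here that one-dimensionality is essential, since in dimension $\geq 2$ every MCM module over an $S_2$ ring, in particular $\omega_R$, is automatically reflexive). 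Your appeal to ``the delta-invariant / Cohen--Macaulay approximation circle of ideas'' does not pin this down; the concrete ingredient is the Aoyama--Goto result.

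\textbf{Converse of (b): a different (and slightly simpler) route.} You argue $\widetilde{\F}(R)\cong R\oplus X$ with $X$ a rank-zero direct summand of a reflexive module over a one-dimensional CM ring, hence torsion-free of finite length, hence zero; so $\widetilde{\F}(R)\cong R$ and Proposition~\ref{aa} gives $\F$PI. That is correct and in fact more direct than the paper's proof, which keeps $X$ around, uses $\widetilde{\F}(R)\cong\Hom_R(\omega_R^{[p]},\omega_R)$ (\cite[Korollar 5.9]{HE74}) and applies $\Hom_R(-,\omega_R)$ to deduce $\omega_R\cong\omega_R^{[p]}$, then invokes \cite[Theorem 4.1]{MA14}. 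Both routes use the same ingredients (rank one, generically Gorenstein via Lemma~\ref{propFPR(S)}, torsion-freeness of reflexives in dimension one); yours simply kills $X$ outright rather than taking a detour through the canonical module. Also note you do not need to pass to the completion anywhere in this theorem: $\F$-finiteness already guarantees a canonical module, and the paper works directly over $R$.
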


\begin{proof}
(a)  Assume that $R$ is Gorenstein. By \cite[Proposition 1.5]{HS93} and  Proposition \ref{propFPIFPR}, $R$ is  $\widetilde{\F}$PR. By \cite[Satz 5.12]{HK71},  $\widetilde{\F}(R) \cong R$.
For the converse,  as $\widetilde{\emph\F}(R)$ is a finitely generated $R$-module of finite injective dimension, $R$ is Cohen-Macaulay. Note that the depth of the module $\Hom_R(^f\hspace{-0.5mm}R, R)$ is independent of the two possible $R$-module structures on it, and so $\widetilde{\F}(R)$ is a maximal Cohen-Macaulay $R$-module. 
Therefore, by \cite[3.3.28]{BH98}, $\widetilde{\F} (R) \cong \oplus \omega_R$. As $\widetilde{\F} (R)$ is reflexive, $\omega_R$ is reflexive. Hence $R$ is Gorenstein (see \cite[(3.2)]{AG85}).
	
(b) Assume that $R$ is $\F$PI. The claim is clear by \cite[Proposition 3.12]{MA14}, Proposition \ref{propFPIFPR} and Proposition \ref{aa}.
Assuming the converse, $R$ is generically Gorenstein by Lemma \ref{propFPR(S)}. As rank of $\widetilde{\F}(R)$ is one, we have $\widetilde{\F}(R) \cong R \oplus X$, where $X$ has no free summand with $\ell_R (X) < \infty$. By \cite[Korollar 5.9]{HE74}, we have $\widetilde{\F}(R) \cong \Hom_R(\omega_R^{[p]}, \omega_R )$. Applying $\Hom_R(- , \omega_R)$, gives $\omega_R \oplus \Hom_R (X, \omega_R) \cong \Hom_R(\Hom_R(\omega_R ^{[p]}, \omega_R), \omega_R)\cong \omega_R^{[p]}$ because  $\omega_R^{[p]}$ is a maximal Cohen-Macaulay $R$-module. As $\Hom_R (X, \omega_R) = 0$, we get $\omega_R \cong \omega_R^{[p]}$. The result follows by \cite[Theorem 4.1]{MA14}  or Corollary \ref{M'}.
\end{proof} 

%%%%%%%%%%%%%%%%%%%%%%%%%%%%%%%%%%%%%%%%%%%%%%%%%%%%%%%%%%%%%%%%%%%%%%%%%%%%%%%%%%%%%%%%%%%%%%%%%%%%%%%%%%%%%%%%%%%%%%%%%%
\section*{Acknowledgments}

The authors would like to appreciate the referee for her/his comments which made many improvements. Thanks to Arash Sadeghi and Ehsan Tavanfar for their invaluable discussions. The second author is deeply grateful to Hailong Dao and Justin Lyle for their insights during his visit at the Department of Mathematics in the University of Kansas (2017).

%%%%%%%%%%%%%%%%%%%%%%%%%%%%%%%%%%%%%%%%%%%%%%%%%%%%%%%%%%%%%%%%%%%%%%%%%%%%%%%%%%%%

%\bibliographystyle{amsplain}
%\bibliography{references}

\begin{thebibliography}{CD}

\bibitem{Aoy83}
Yoichi Aoyama, \emph{Some basic results on canonical modules}, Journal of Mathematics of Kyoto University \textbf{23} (1983), no.~1, 85--94.

\bibitem{AG85}
Yoichi Aoyama and Shiro Goto, \emph{On the endomorphism ring of the canonical module}, Journal of Mathematics of Kyoto University \textbf{25} (1985),
no.~1, 21--30.

\bibitem{Aus69}
Maurice Auslander and Mark Bridger, \emph{Stable module theory}, no.~94, American Mathematical Society, 1969.

\bibitem{BS12}
Markus~P. Brodmann and Rodney~Y. Sharp, \emph{Local cohomology: an algebraic introduction with geometric applications}, vol. 136, Cambridge university
press, 2012.

\bibitem{BH98}
Winfried Bruns and J\"{u}rgen Herzog, \emph{Cohen-{M}acaulay rings}, Cambridge
University Press, 1998.

\bibitem{CH00}
Lars W. Christensen, \emph{Gorenstein dimensions}, no. 1747, Springer Science \& Business Media, 2000.

\bibitem{DEL19}
Hailong Dao, Mohammad Eghbali, and Justin Lyle, \emph{$\Hom$ and $\Ext$, revisited}, Journal of Algebra, to appear (2019).

\bibitem{D92}
Songqing Ding, \emph{Cohen-{M}acaulay approximation and multiplicity}, Journal of Algebra \textbf{153} (1992), no.~2, 271--288.

\bibitem{Ga04}
Ofer Gabber, \emph{Notes on some $t$-structures, {G}eometric aspects of {D}work theory, vol. I; II, 711-734}, Walter de Gruyter, 2004.

\bibitem{Go77}
Shiro Goto, \emph{A problem on {N}oetherian local rings of characteristic $p$},
Proceedings of the American Mathematical Society \textbf{64} (1977), no.~2,
199--205.

\bibitem{HS97}
Mitsuyasu Hashimoto and Akira Shida, \emph{Some remarks on index and generalized Loewy length of a Gorenstein local ring}, Journal of Algebra \textbf{187} (1997), no.~1, 150--162.

\bibitem{HE74}
J{\"u}rgen Herzog, \emph{Ringe der charakteristik $p$ und Frobeniusfunktoren}, Mathematische Zeitschrift \textbf{140} (1974), no.~1,
67--78.

\bibitem{HK71}
J{\"u}rgen Herzog and Ernst Kunz, \emph{Der kanonische modul eines Cohen-Macaulay-rings, Lecture Notes in Math.}, vol. 238,
Springer-Verlag, 1971.

\bibitem{HH94}
Melvin Hochster and Craig Huneke, \emph{Indecomposable canonical modules},
Commutative algebra: syzygies, multiplicities, and birational algebra
\textbf{159} (1994), 197.

\bibitem{HS93}
Craig~L. Huneke and Rodney~Y. Sharp, \emph{Bass numbers of local cohomology modules}, Transactions of the American Mathematical Society \textbf{339} (1993), no.~2, 765--779.

\bibitem{Kun69}
Ernst Kunz, \emph{Characterizations of regular local rings of characteristic $p$}, American Journal of Mathematics \textbf{91} (1969), no.~3, 772--784.

\bibitem{Kun76}
\bysame, \emph{On Noetherian rings of characteristic $p$}, American Journal of Mathematics \textbf{98} (1976), no.~4, 999--1013.

\bibitem{LW12}
Graham~J. Leuschke and Roger Wiegand, \emph{Cohen-Macaulay representations},
no. 181, American Mathematical Society, 2012.

\bibitem{LMa14}
Linquan Ma, \emph{A sufficient condition for F-purity}, Journal of Pure and Applied Algebra \textbf{218} (2014), no.~7, 1179--1183.

\bibitem{MA14}
Thomas Marley, \emph{The Frobenius functor and injective modules}, Proceedings of the American Mathematical Society \textbf{142} (2014), no.~6,
1911--1923.

\bibitem{VM00}
Vladimir Ma{\c{s}}iek, \emph{Gorenstein dimension and torsion of modules over commutative Noetherian rings}, Communications in Algebra \textbf{28}
(2000), no.~12, 5783--5811.

\bibitem{Mil03}
Claudia Miller, \emph{The Frobenius endomorphism and homological dimensions}, Contemporary Mathematics, Vol. 331, 2003.

\bibitem{PS73}
Christian Peskine and Lucien Szpiro, \emph{Dimension projective finie et cohomologie locale}, Publications Math{\'e}matiques de l'Institut des Hautes
{\'E}tudes Scientifiques \textbf{42} (1973), no.~1, 47--119.

\bibitem{JR79}
Joseph~J. Rotman, \emph{Introduction to homological algebra, 85}, vol.~85, Academic Press, 1979.

\bibitem{TY04}
Ryo Takahashi and Yuji Yoshino, \emph{Characterizing Cohen-Macaulay local rings by Frobenius maps}, Proceedings of the American Mathematical Society
\textbf{132} (2004), no.~11, 3177--3187. 

\end{thebibliography}

%%%%%%%%%%%%%%%%%%%%%%%%%%%%%%%%%%%%%%%%%%%%%%%%%%%%%%%%%%%%%%%%%%%%%%%%%%%%%

\end{document}